 \documentclass{amsart} 
 \usepackage{a4wide}
 \usepackage{times}
\usepackage{latexsym,amssymb,amsmath,hyperref}
\usepackage{xcolor}
\usepackage{bm}
\newcommand{\R}{{\mathbb R}}

\newcommand{\SF}{{\mathbb S}}

\newcommand{\dist}{{\rm dist}}

\newcommand{\C}{{\mathcal C}}

\newcommand{\K}{{\mathcal K}}

\newcommand{\B}{{\mathcal  B}}

\newcommand{\M}{{\mathcal M}}

\newcommand{\bA}{{\mathbf A}}
\newcommand{\bu}{{\mathbf u}}
\newcommand{\stilde}{{~}}
\def\loc{\mathrm{loc}}
\def\weak{\mathrm{weak}}

\def\Xint#1{\mathchoice
   {\XXint\displaystyle\textstyle{#1}}%
   {\XXint\textstyle\scriptstyle{#1}}%
   {\XXint\scriptstyle\scriptscriptstyle{#1}}%
   {\XXint\scriptscriptstyle\scriptscriptstyle{#1}}%
   \!\int}
\def\XXint#1#2#3{{\setbox0=\hbox{$#1{#2#3}{\int}$}
     \vcenter{\hbox{$#2#3$}}\kern-.5\wd0}}

\newtheorem{thm}{Theorem}[section]  %% Definition of Theorem
\newtheorem{lem}[thm]{Lemma}	       %% Definition of Lemma
\newtheorem{crlr}[thm]{Corollary}      %% Definition of Corollary
\newtheorem{prp}[thm]{Proposition}     %% Definition of Proposition
\newtheorem{defin}[thm]{Definition}    %% Definition of Definition
\newtheorem{rem}{Remark}	
\numberwithin{equation}{section}

\begin{document}

\title{Interior a priori estimates for 
higher--order elliptic systems   in Orlicz spaces}
\author{Amiran Gogatishvili, Pia Salerno, Lubomira Softova}

\address{A. Gogatishvili, 
Institute of Mathematics of the Czech Academy of Sciences 
25, \'Zitn\'a~, Prague~1, 
115~67, Praha, Czech Republic}
\email{gogatish@math.cas.cz, ORCID: 0000000334590355.}

\address{P. Salerno, Department of Mathematics, University of Salerno, 84084, Fisciano (SA), Italy}
\email{psalerno@unisa.it, ORCID: 0009000624391159.} 

\address{Corresponding author: L. Softova, Department of Mathematics, University of Salerno, 84084, Fisciano (SA), Italy} \email{lsoftova@unisa.it, ORCID: 0000000294989088.} 

\subjclass[2020]{30H35, 32A37, 35D35, 35J48, 35J58, 42B20, 42B25, 46E30, 46E35, 47G10}
\keywords{Orlicz spaces,  singular integral operators, variable Calder\'on--Zygmund kernels, higher-order elliptic systems, estimates for strong solutions in Sobolev-Orlicz spaces, $BMO$ spaces, $VMO$ coefficients. }

\begin{abstract}
We investigate singular integral operators with variable Calder\'on--Zygmund kernels and their commutators with $VMO$ functions on Orlicz spaces. After revisiting the classical $L^p$ theory, we establish boundedness results in $L^\Phi$ under the standard $\Delta_2$ and $\nabla_2$ conditions on the Young function. The analysis combines decomposition techniques with weak-type estimates to derive the main operator bounds. As an application, these results provide a functional-analytic framework for establishing a priori estimates and proving the interior regularity of solutions to higher-order elliptic equations and systems with discontinuous coefficients.
\end{abstract}

\maketitle

\section{Introduction}\label{sec1}

Regularity theory for elliptic partial differential equations (PDEs) and systems with discontinuous coefficients has a long and rich history (see, e.g., \cite{BlS,BPSf,CZ1,CZ2,ChFF,DgN,LU,PSf1,PSf2,Sf1}). In the classical Lebesgue space setting, interior $W^{2b,p}$ estimates for higher-order linear elliptic operators with bounded measurable or $VMO$ coefficients were obtained by combining Korn's method of freezing coefficients with Calder\'on--Zygmund singular integral theory and commutator estimates.

This approach, initiated by the pioneering works of Chiarenza, Frasca, and Longo \cite{ChFrL1,ChFrL2}, has subsequently been extended in several directions. Bramanti and Cerutti \cite{BC} applied it to the 
Cauchy--Dirichlet problem for parabolic equations, Softova \cite{Sf2} obtained analogous results for higher-order parabolic equations, while Palagachev and Di Fazio \cite{DP1} established interior regularity for elliptic equations in non-divergence form. 
The case of quasilinear elliptic equations with $VMO$ coefficients was first investigated by Palagachev \cite{P} using a different method and was later further developed by Palagachev and Di Fazio \cite{DP2}. These works constitute the foundation of the modern regularity theory for elliptic and parabolic problems with discontinuous coefficients in the scale of $L^p$-Sobolev spaces.

More precisely, let $ \mathcal{L}(x,D)\mathbf{u}=\mathbf{f}$ 
be a system of differential operators that is uniformly elliptic in the sense of Agmon--Douglis--Nirenberg (cf.~\cite{DgN}) and whose coefficients belong to $VMO(\Omega)$. It is now well established that strong solutions satisfy the interior a priori estimate
$$
\|D^{2b}\mathbf{u}\|_{L^p(\Omega')}
\leq C\left(
\|\mathbf{f}\|_{L^p(\Omega'')} +
\|\mathbf{u}\|_{L^p(\Omega'')} \right),
\qquad 1<p<\infty,
$$
for every pair of subdomains $ 
\Omega'\Subset\Omega''\Subset\Omega.$ 
These estimates rely essentially on the boundedness of Calder\'on--Zygmund singular integral operators and their commutators in $L^p$ spaces, together with the vanishing mean oscillation of the coefficients at small scales. Furthermore, this regularity theory has been extended to the setting of Morrey and generalized Morrey--Sobolev spaces (see, for example  \cite{DPR,PRSf,PSf2}),  as well as subsequent contributions by Byun, Guliyev, and their collaborators.

Although the regularity theory in the $L^p$ setting is by now well established, its extension to function spaces with non-power growth remains far from complete. In many problems arising in the calculus of variations, non-linear partial differential equations, and models with non-standard growth, Orlicz and Orlicz--Sobolev spaces provide the natural analytical framework (see, for instance, \cite{CMa} and the references therein). However, the existing regularity results are largely restricted to scalar equations or second-order elliptic operators. To the best of our knowledge, a comprehensive interior regularity theory for higher-order elliptic systems in Orlicz spaces is still unavailable.

These spaces constitute a natural generalization of the classical Lebesgue scale, capable of accommodating non-power growth conditions and describing a broader class of function behaviours. Introduced by Orlicz in 1932, they have since evolved into a fundamental tool in modern analysis. Their theory is comprehensively presented in the monographs \cite{RR,RR2}, while the classical treatise \cite{KR} contains numerous applications to differential equations, integral operators, and function theory. Orlicz spaces play a central role in the study of problems with non-standard growth, including those arising in non-linear partial differential equations, fluid mechanics, and materials science.

The aim of this paper is to extend the classical interior regularity theory for higher-order elliptic systems with $VMO$ coefficients from the Lebesgue setting to the broader framework of Orlicz--Sobolev spaces associated with Young functions satisfying the standard $\Delta_2$ and $\nabla_2$ conditions.

More specifically, we consider linear elliptic systems of order $2b$, where $b\geq 1$ is an integer, whose matrix-valued coefficients belong to $VMO(\Omega)\cap L^\infty(\Omega)$ and satisfy the uniform ellipticity condition of Agmon--Douglis--Nirenberg. Assuming that the right-hand side $\mathbf{f}$ of \eqref{syst} belongs to the Orlicz space $L^\Phi(\Omega;\R^m)$, where $m\geq 1$ and the Young function $\Phi$ satisfies the standard $\Delta_2$ and $\nabla_2$ conditions, we establish interior a priori estimates in the Orlicz--Sobolev space $W^{2b}_\Phi(\Omega;\R^m)$ for strong solutions of the system.

Our main theorem establishes the following interior a priori estimate: if 
$ 
\mathbf{u}\in W^{2b}_\Phi(\Omega;\R^m)$ 
is a strong solution of \eqref{syst}, then for every pair of subdomains $ 
\Omega'\Subset\Omega''\Subset\Omega,$ 
there exists a constant $C>0$ such that
$$
\|\mathbf{u}\|_{W^{2b}_\Phi(\Omega')}
\leq C\left(
\|\mathbf{f}\|_{L^\Phi(\Omega'')} +
\|\mathbf{u}\|_{L^\Phi(\Omega'')}
\right).
$$

The constant $C$ depends only on the dimension, the order of the system, the ellipticity constant, the $L^\infty$-norms of the coefficients, and their $VMO$ modulus.

Methodologically, the proof follows the classical approach based on freezing the coefficients, representing solutions by means of the fundamental solution of the associated constant-coefficients operator, and estimating the resulting singular integral operators and commutators. The main challenge is to adapt this strategy to the framework of Orlicz spaces. To this end, we establish suitable interpolation inequalities under the standard $\Delta_2$ and $\nabla_2$ conditions on the Young function and combine them with a Campanato-type iteration argument to obtain uniform control of the lower-order derivatives.

The novelty of the present work does not lie in the underlying methodology, which is firmly rooted in the classical Calder\'on--Zygmund theory, but rather in the level of generality achieved. To the best of our knowledge, interior $W^{2b}_\Phi$ estimates for higher-order elliptic systems with VMO coefficients have not previously been established in the setting of Orlicz spaces associated with Young functions satisfying the standard $\Delta_2$ and $\nabla_2$ conditions. Our results demonstrate that the classical regularity theory for elliptic systems with $VMO$ coefficients is robust with respect to the underlying growth scale and extends naturally from the Lebesgue framework to a broad class of Orlicz spaces.

The paper is organized as follows. Sections~\ref{sec2} and~\ref{sec3} review the basic properties of Orlicz and Orlicz--Sobolev spaces and the boundedness theory of the Hardy--Littlewood maximal operator. Section~\ref{sec4} is devoted to the study of Calder\'on--Zygmund singular integral operators and their commutators in Orlicz spaces. Finally, Section~\ref{sec5} establishes the main interior regularity theorem for higher-order elliptic systems with $VMO$ coefficients.

We shall use the following notation throughout the paper.

\begin{itemize}

\item For $x=(x_1,\ldots,x_n)\in\R^n$, we denote by
$  |x|=\left(\sum_{i=1}^n x_i^2\right)^{1/2} $  the Euclidean norm.

\item For $x\in\R^n$ and $r>0$, $ 
\mathbb B_r(x)=\{y\in\R^n:|x-y|<r\}$ 
denotes the open Euclidean ball of radius $r$ centred at $x$. Its Lebesgue measure is $ 
| \B_r(x)|=\omega_n r^n,$ 
where $\omega_n=| \B_1(0)|$ is the volume of the unit ball.

\item
$ \SF^{n-1}= \{x\in\R^n:|x|=1\}$  denotes the unit sphere in $\R^n$, whose surface measure satisfies $ 
|\SF^{n-1}|=n\omega_n.$ 

\item For $1\leq p<\infty$, $L^p(\R^n)$ denotes the  Lebesgue space, while $
L^{p,\infty}(\R^n)\equiv L^p_{\weak}(\R^n) $
denotes the weak Lebesgue space equipped with the quasi-norm
$$
\|f\|_{L^p_{\mathrm{weak}}(\R^n)} =
\sup_{\lambda>0} \lambda \bigl|
\{x\in\R^n:|f(x)|>\lambda\} \bigr|^{1/p}.
$$

\item If $f\in L^1(D)$, where $D\subset\R^n$ is a measurable set of finite measure, we denote by
$$
f_D=
\frac1{|D|} \int_D f(y)\,dy= \Xint-_D f(y)\,dy
$$
the average of $f$ over $D$.

\item 
$\alpha=(\alpha_1,\dots,\alpha_n)$ is  a multi-index of order
$|\alpha|=\alpha_1+\cdots+\alpha_n$.

\item 
We use the notation
$D^\alpha=D_{x_1}^{\alpha_1}\cdots D_{x_n}^{\alpha_n}$, where
$D_{x_i}=\partial/\partial x_i$, and  write $D^{2b}$ for  the collection of all derivatives of order   $2b$.

\item Unless otherwise specified, vector-valued function spaces are understood component-wise, and write 
$ \|\cdot\|_{L^\Phi(\Omega)}$ 
instead of $ \|\cdot\|_{L^\Phi(\Omega;\R^m)}. $  For a vector function  $\bu$ we write 
 $$
\|D^{2b}\bu\|_{L^\Phi(\Omega)}:=\sum_{|\alpha|=2b}\|D^\alpha \bu\|_{L^\Phi(\Omega)}.
 $$

\end{itemize}

%----------------------------------------------------------
\section{Function Spaces and Auxiliary Results}\label{sec2}
Our objective is to investigate the local regularity of solutions to higher-order linear elliptic equations and systems. To this end, we introduce the function spaces and integral operators that constitute the main analytical tools in the derivation of the regularity estimates.

One of the fundamental operators in harmonic analysis is the \emph{Hardy--Littlewood maximal operator} $\M$, which is defined for every locally integrable function $f\in L^1_{\loc}(\R^n)$ by
\begin{equation}
\M f(x)=\sup_{r>0}\Xint-_{\B_r(x)}|f(z)|\,dz,
\qquad x\in\R^n,
\end{equation}
where the supremum is taken over all balls $\B_r(x)$ centred at $x$.

The Hardy--Littlewood maximal operator possesses several fundamental properties (see, for instance, \cite{GC,Gr,RR2,St}). In particular, it satisfies the pointwise estimate
$|f(x)|\leq \M f(x),$ 
for almost every $x\in\R^n,$
which follows directly from the Lebesgue Differentiation Theorem (cf. \cite{Gr,St}).
Moreover, the strong-type $(p,p)$ Hardy--Littlewood inequality asserts that, for every
$f\in L^p(\R^n)$ \mbox{with $1<p<\infty$,} one has
\begin{equation*}%\label{strong1}
\|\M f\|_{L^p(\R^n)}
\le
C_p\,\|f\|_{L^p(\R^n)}.
\end{equation*}

When $p=1$, the maximal operator satisfies the weak-type $(1,1)$ estimate
\begin{equation}\label{weak1}
\bigl|\{x\in\R^n:\M f(x)>t\}\bigr|
\le
\frac{C(n)}{t}
\int_{\R^n}|f(x)|\,dx, \qquad \forall\, t>0.
\end{equation}

To describe the regularity of the coefficients, we recall the spaces introduced by John and Nirenberg~\cite{JN} and later refined by Sarason~\cite{Sa}. 

A measurable locally integrable function $a$ is said to belong to the space $BMO$ (\emph{Bounded Mean Oscillation}) if the seminorm
\begin{equation}\label{BMO}
\|a\|_\ast = \sup_{\B_r(x)} \Xint-_{\B_r(x)}
|a(z)-a_{\B_r(x)}|\,dz
\end{equation}
is finite, where the supremum is taken over all balls $\B_r(x)\subset\R^n$. The quantity $\|\cdot\|_\ast$ is a seminorm on $BMO$ and induces a norm on the quotient space $ BMO/\R $ modulo constant functions, which is a Banach space.
Moreover, $ L^\infty(\R^n)\subset  BMO,$ and the estimate $ 
\|a\|_\ast\leq 2\|a\|_{L^\infty(\R^n)} $ 
holds for every $a\in L^\infty(\R^n)$.

A function  $a \in BMO$ belongs to the space $VMO$  ({\it Vanishing Mean Oscillation}) if 
\begin{equation}\label{VMO}
\eta_a(R) = \sup_{\B_r(y), \, r \leq R}  \Xint-_{\B_r(y)} |a(z) - a_{\B_r(y)}| \, dz
\end{equation}
tends to zero as $R$ tends to zero. The quantity $\eta_a(R)$ is called the $VMO$-modulus of $a.$

The following classical results summarize some fundamental properties of $BMO$ and $VMO$  functions.

\begin{lem}[John--Nirenberg lemma, \cite{JN}]\label{JN-type}
Let $a\in BMO$ and let $1\leq p<\infty$. Then, for every ball $\B_r(x)\subset\R^n$, one has
\begin{equation}\label{JN}
\left( \Xint-_{\B_r(x)} |a(z)-a_{\B_r(x)}|^p\,dz
\right)^{1/p} \leq 
C(p)\|a\|_\ast.
\end{equation}
\end{lem}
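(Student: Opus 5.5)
The plan is to derive the $L^p$ bound from the exponential-integrability form of the John--Nirenberg inequality, which is the genuinely deep input. Fix a ball $B=\B_r(x)$ and $a\in BMO$ with $\|a\|_\ast>0$; if $\|a\|_\ast=0$ then $a$ is a.e.\ constant and there is nothing to prove. First I will establish the distributional estimate
\begin{equation*}
\bigl|\{z\in B:\ |a(z)-a_B|>\lambda\}\bigr|\leq c_1\,|B|\,e^{-c_2\lambda/\|a\|_\ast},\qquad \lambda>0,
\end{equation*}
with $c_1,c_2$ depending only on $n$. Once this is available, the claimed inequality follows from the layer-cake formula:
\begin{equation*}
\Xint-_{B}|a-a_B|^p\,dz=\frac{p}{|B|}\int_0^\infty \lambda^{p-1}\bigl|\{z\in B:|a-a_B|>\lambda\}\bigr|\,d\lambda
\leq c_1\,p\int_0^\infty \lambda^{p-1}e^{-c_2\lambda/\|a\|_\ast}\,d\lambda .
\end{equation*}
The last integral equals $c_1\,\Gamma(p+1)\,c_2^{-p}\,\|a\|_\ast^p$. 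Taking $p$-th roots gives \eqref{JN} with $C(p)=(c_1\Gamma(p+1))^{1/p}/c_2$.

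The distributional estimate is the main obstacle, and I would prove it by a Calder\'on--Zygmund stopping-time argument. Since balls are not dyadic, I first pass to cubes. Enclose $B$ in a cube $Q$ and enclose $Q$ in a ball of comparable size; this shows that the cube-based $BMO$ seminorm is comparable to $\|a\|_\ast$, with constants depending on $n$. Replacing $a_B$ by $a_Q$ costs at most $C(n)\|a\|_\ast$, and this is absorbed into $c_1$.

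Now normalize $\|a\|_{\ast,\text{cube}}=1$. Run the Calder\'on--Zygmund decomposition of $|a-a_Q|$ on $Q$ at height $2$. This yields disjoint dyadic subcubes $Q_j$ with the following three properties:
\begin{itemize}
\item $\sum|Q_j|\leq \frac12|Q|$;
\item $|a-a_Q|\leq 2$ a.e.\ off $\bigcup Q_j$;
\item $|a_{Q_j}-a_Q|\leq 2^{n+1}$, which follows from the maximality of each $Q_j$ together with the bound on its dyadic parent.
\end{itemize}

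Iterate this decomposition inside each $Q_j$, and define $F(\lambda)=\sup_Q |\{z\in Q:|a-a_Q|>\lambda\}|/|Q|$. One step of the decomposition gives the recursion $F(\lambda)\leq \frac12 F(\lambda-2^{n+1})$ for $\lambda>2^{n+1}$. Since $F\leq1$, iterating the recursion yields $F(\lambda)\leq 2\cdot 2^{-\lambda/2^{n+1}}$, which is the exponential decay with $c_2=\ln 2/2^{n+1}$.

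One technical point needs care. The supremum $F$ should be taken over functions truncated at a finite level, say $\max(\min(a,N),-N)$, so that $F$ is finite a priori. Truncation does not increase the $BMO$ seminorm, up to a factor of $2$. Afterwards let $N\to\infty$ by monotone convergence. Finally, note that the case $p=1$ is trivial directly from \eqref{BMO}.
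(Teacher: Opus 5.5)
Your argument is correct and is the classical John--Nirenberg proof: a Calder\'on--Zygmund stopping-time iteration on cubes gives exponential decay of the distribution function, and the layer-cake formula then gives the bound. The paper itself gives no proof of the lemma and only cites John--Nirenberg, whose argument is essentially this one; the one superfluous point in yours is the truncation, since $F(\lambda)\le 1$ trivially (as you note), so $F$ is finite a priori, and a limit $N\to\infty$ would in any case need pointwise convergence plus Fatou's lemma rather than monotone convergence.
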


\begin{thm}[Sarason~\cite{Sa}]\label{SARASON}
Let $f\in BMO$. Then, the following assertions are equivalent:
\begin{itemize}
\item[(i)] $f\in VMO$;

\item[(ii)] $f$ belongs to the closure, with respect to the $BMO$ seminorm, of the space of bounded uniformly continuous functions;

\item[(iii)] \ 
$\displaystyle \lim_{y\to0} 
\|f(\cdot-y)-f(\cdot)\|_\ast=0.$
\end{itemize}
\end{thm}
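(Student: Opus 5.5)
We prove the cycle (i)$\Rightarrow$(ii)$\Rightarrow$(iii)$\Rightarrow$(i). Throughout we use that the oscillation over a ball is controlled by the $L^\infty$ norm, and that for $g$ uniformly continuous with modulus $\omega_g$ one has $\Xint-_{\B_r(x)}|g-g_{\B_r(x)}|\le \omega_g(2r)$.

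\emph{(ii)$\Rightarrow$(iii).} Given $\varepsilon>0$, choose a bounded uniformly continuous $g$ with $\|f-g\|_\ast<\varepsilon$. Translation preserves the $BMO$ seminorm, so
$$
\|f(\cdot-y)-f\|_\ast\le 2\|f-g\|_\ast+\|g(\cdot-y)-g\|_\ast\le 2\varepsilon+2\|g(\cdot-y)-g\|_{L^\infty}\le 2\varepsilon+2\omega_g(|y|),
$$
and this is smaller than $3\varepsilon$ for $|y|$ small.

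\emph{(iii)$\Rightarrow$(i).} This is the step I expect to be the main obstacle. First I would show $f\in VMO$ directly by regularising. Let $\varphi_\delta$ be a standard mollifier supported in $\B_\delta(0)$. By Minkowski's inequality for the mean oscillation,
$$
\|f*\varphi_\delta-f\|_\ast\le \sup_{|y|<\delta}\|f(\cdot-y)-f\|_\ast\to0 .
$$
This holds because the mean oscillation is a sublinear functional of the form $\sup_B \inf_c \Xint-_B|h-c|$, and averaging in $y$ commutes with it up to a factor 2. Next, $f*\varphi_\delta$ is uniformly continuous. Indeed, $|f*\varphi_\delta(x)-f*\varphi_\delta(x')|\le \int |f(z)||\varphi_\delta(x-z)-\varphi_\delta(x'-z)|dz$. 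Since $f\in BMO$ is only locally integrable with controlled growth, I would subtract the constant $f_{\B_{2\delta}(x)}$ and use Lemma~\ref{JN-type} with $p=1$. This bounds the gradient of $f*\varphi_\delta$ by $C\delta^{-1}\|f\|_\ast$, uniformly in $x$. Hence the oscillation of $f*\varphi_\delta$ on $\B_r$ is at most $Cr\delta^{-1}\|f\|_\ast$, and therefore
$$
\eta_f(R)\le 2\|f-f*\varphi_\delta\|_\ast+CR\delta^{-1}\|f\|_\ast .
$$
Choose $\delta$ first to make the first term small, then $R$ to make the second small.

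\emph{(i)$\Rightarrow$(ii).} Take $f\in VMO$ and again use the mollification $f_\delta=f*\varphi_\delta$. To estimate $\|f-f_\delta\|_\ast$, consider a ball $\B_r$ and split into two cases.
\begin{itemize}
\item[$\bullet$] For $r\le\delta$: compare $f$ and $f(\cdot-y)$ on $\B_r$ via the enlarged ball $\B_{2\delta}$. This gives oscillation at most $C\eta_f(2\delta)$.
\item[$\bullet$] For $r>\delta$: cover $\B_r$ by balls of radius $\delta$, or compare averages directly. The mean oscillation of $f-f(\cdot-y)$ over $\B_r$ is bounded by $\Xint-_{\B_r}|f-f_{\B_{r}}|$ restricted to the shifted boundary layer, which reduces to $C\eta_f(C\delta)$ by the same Whitney-type covering.
\end{itemize}
This yields $f_\delta\to f$ in $BMO$, with $f_\delta$ uniformly continuous by the gradient bound above.

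It remains to make $f_\delta$ bounded. I would truncate, setting $T_Mf_\delta=\max(-M,\min(f_\delta,M))$. Truncation is 1-Lipschitz and does not increase oscillation, but $\|f_\delta-T_Mf_\delta\|_\ast$ need not tend to zero in general on $\R^n$. Following Sarason, I would instead replace truncation by the modification $f_\delta-h$, where $h$ is a locally Lipschitz function with slowly varying averages; for instance $h$ is the piecewise-affine interpolation of the averages of $f_\delta$ on a dyadic grid of mesh $\delta$. This makes $f_\delta-h$ bounded by $C\|f\|_\ast$ and uniformly continuous, and $\|h\|_\ast\le C\eta_f(C\delta)$.

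This last boundedness issue, together with the covering estimate in the case $r>\delta$, is where the real work lies. If it becomes too delicate, I would simply cite Sarason~\cite{Sa} for this implication.
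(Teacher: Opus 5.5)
The paper gives no proof of this theorem; it simply quotes Sarason. Judged on its own, your (ii)$\Rightarrow$(iii) and (iii)$\Rightarrow$(i) are correct. Averaging in $y$ gives $\|f*\varphi_\delta-f\|_\ast\le\sup_{|y|<\delta}\|f(\cdot-y)-f\|_\ast$, even without the factor $2$. Since $\int\nabla\varphi_\delta=0$, the definition of $\|\cdot\|_\ast$ alone gives $|\nabla(f*\varphi_\delta)|\le C\delta^{-1}\|f\|_\ast$.

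The gap is the step you postponed in (i)$\Rightarrow$(ii): producing \emph{bounded} approximants. It cannot be closed, because for unbounded $f$ this implication is false as (ii) is written. Take $f(x)=\log(1+|x|)$.
\begin{itemize}
\item[$\bullet$] It lies in $BMO$, since it differs from $\max\{\log|x|,0\}$ by at most $\log 2$. It is $1$-Lipschitz, so $\eta_f(R)\le 2R$ and $\|f(\cdot-y)-f\|_\ast\le 2|y|$. Thus (i) and (iii) hold.
\item[$\bullet$] Suppose $g\in L^\infty$ and $\|f-g\|_\ast=\varepsilon$. The exponential John--Nirenberg inequality for $f-g$ on $\B_R(0)$ gives $|\{x\in\B_R(0):|f-f_{\B_R(0)}|>\lambda+2\|g\|_\infty\}|\le C_1|\B_R|e^{-c_2\lambda/\varepsilon}$.
\item[$\bullet$] On the other hand, $f_{\B_R(0)}\ge\log R-\frac1n$. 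So for $R$ large this set contains a ball of relative measure $2^{-n}e^{-1-n(\lambda+2\|g\|_\infty)}$.
\item[$\bullet$] Letting $\lambda\to\infty$ forces $\varepsilon\ge c_2/n$. Hence $f$ has positive $BMO$-distance from $L^\infty$, and a fortiori from the bounded uniformly continuous functions.
\end{itemize}
In your construction the failure sits in the claim $\|h\|_\ast\le C\eta_f(C\delta)$. The modulus $\eta_f$ only sees small balls, whereas $h$ differs from $f_\delta$ by a bounded function and so keeps the non-vanishing large-scale oscillation of $f$ (for this example $h\approx f$). Citing Sarason does not rescue the step either. His theorem on the line uses the $BMO$-closure of $UC\cap BMO$, i.e.\ uniformly continuous functions belonging to $BMO$, not bounded ones.

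Prove that version instead, and the truncation becomes unnecessary. Your gradient bound makes $f_\delta$ Lipschitz, and $\|f_\delta\|_\ast\le\|f\|_\ast$. It remains to show $\|f-f_\delta\|_\ast\le C\eta_f(C\delta)$.
\begin{itemize}
\item[$\bullet$] For $r<\delta$: bound the oscillation of $f$ on $\B_r$ by $\eta_f(\delta)$. Bound that of $f_\delta$ by $r\sup|\nabla f_\delta|\le Cr\delta^{-1}\eta_f(\delta)$.
\item[$\bullet$] For $r\ge\delta$: estimate $\Xint-_{\B_r}|f_\delta-f|$ itself. Cover $\B_r$ by grid cubes $Q_j$ of side $\delta$, whose total measure is at most $C|\B_r|$. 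For $x\in Q_j$ use $|f_\delta(x)-f(x)|\le\int\varphi_\delta(x-z)\bigl(|f(z)-f_{3Q_j}|+|f(x)-f_{3Q_j}|\bigr)\,dz$. No ``boundary layer'' enters.
\end{itemize}
If only bounded $f$ is needed, as for the $VMO\cap L^\infty$ coefficients used later in the paper, then $\|f_\delta\|_\infty\le\|f\|_\infty$. In that case the bounded form of (ii) does hold.
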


%------------------------------------------------
\section{Orlicz spaces, Definitions and Fundamental Properties}\label{sec3}

We begin by recalling the fundamental properties of Young functions,  which play a central role in our analysis. These functions are \textit{convex} and provide a natural generalization of the power functions $\Phi(t)=t^p$ with $p>1$, which characterize the classical Lebesgue spaces (see, e.g., \cite{VK,KR}).

A function $\Phi:[0,\infty)\to[0,\infty)$ is called a
\emph{Young function} if it is convex, lower semicontinuous,
not identically zero, satisfies $\Phi(0)=0$, and
$$
\lim_{t\to\infty}\Phi(t)=\infty
$$

An equivalent characterization in terms of an integral representation can be found in \cite{PK,RR}.

A Young function $\Phi$ is called an \emph{$N$-function} if
\begin{equation}\label{Nfunction}
\lim_{t\to0^+}\frac{\Phi(t)}{t}=0,
\qquad
\lim_{t\to\infty}\frac{\Phi(t)}{t}=\infty.
\end{equation}

An $N$-function $\Phi$ satisfies
\begin{itemize}
\item[(i)] the \emph{$\Delta_2$-condition} if there exists a constant $\mu>1$ such that
\begin{equation}\label{def-delta2}
\Phi(2t)\le \mu\,\Phi(t),
\qquad \text{for all } t>0;
\end{equation}

\item[(ii)] the \emph{$\nabla_2$-condition} if there exists a constant $l>1$ such that
\begin{equation}\label{def-nabla2}
\Phi(t)\le \frac{1}{2l}\,\Phi(lt),
\qquad \text{for all } t>0.
\end{equation}
\end{itemize}

We write $\Phi\in\Delta_2\cap\nabla_2$ whenever both conditions are satisfied.
The $\Delta_2$- and $\nabla_2$-conditions provide a two-sided control on the growth of $\Phi$. As will become clear in the subsequent sections, both conditions are essential for establishing the regularity results proved in this paper. Accordingly, throughout the paper we assume that $\Phi\in\Delta_2\cap\nabla_2.$

For example, the Young function $\Phi_1(t)=t^p$, where $1<p<\infty$, satisfies the $\Delta_2$-condition for every $\mu\geq 2^p$ and the $\nabla_2$-condition for every $l\geq 2^{1/(p-1)}$. Other examples of Young functions are
$$
\Phi_2(t)=e^t-1, \qquad
\Phi_3(t)=t\ln(1+t), \qquad t\ge0.
$$

It is readily verified that $\Phi_2$ grows faster than any polynomial. Consequently,
$\Phi_2\in\nabla_2$ but $\Phi_2\notin\Delta_2$. On the other hand, $\Phi_3$ grows more slowly than any polynomial, but faster than linearly. Hence,
$\Phi_3\in\Delta_2$, whereas $\Phi_3\notin\nabla_2$.

In addition, the $\Delta_2$-condition implies (see, for instance, \cite{AG,VK,RR}) that, for every $\lambda>1$, there exists a constant $\mu_\lambda>1$ such that
\begin{equation}\label{lambda}
\Phi(\lambda t)\leq \mu_\lambda\,\Phi(t), 
\qquad \forall \ t>0.
\end{equation}

The $\nabla_2$-condition implies the quasi-convexity of $\Phi$ (see, for instance, \cite[Lemma~1.2.3]{VK} and \cite[Lemma~6.1.6]{AG}). Moreover, the following characterization, established in \cite[Lemma~1.1.1]{VK} will play a key role in our analysis.

A function $\phi:[0,\infty)\to\mathbb{R}$ is called \emph{quasi-convex} if there exist a convex function \mbox{$\omega:[0,\infty)\to\mathbb{R}$} and a constant $C_1>0$ such that
\begin{equation}\label{quasiconvexity}
\omega(t)\leq \phi(t)\leq C_1\,\omega(C_1t), \qquad \forall\, t\geq 0.
\end{equation}
Every convex function is quasi-convex, but the converse is not true in general.

Quasi-convexity admits several equivalent characterizations, two of which are particularly relevant in the theory of Orlicz spaces.
\begin{lem}[\text{\cite[Theorem~1.2.1]{VK}}]\label{pow2}
Let $\Phi$ be a Young function. Then, the following statements are equivalent:
\begin{enumerate}
\item $\Phi \in \nabla_2$;
\item there exists $\alpha \in (0,1)$ such that $\Phi^\alpha$ is quasi-convex.
\end{enumerate}
\end{lem}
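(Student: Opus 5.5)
We prove both implications, using as a bridge the growth characterization of the $\nabla_2$-condition: $\Phi\in\nabla_2$ if and only if there are $p>1$ and $c>0$ such that
\begin{equation*}
\Phi(st)\ge c\,s^{p}\,\Phi(t)\qquad\text{for all } s\ge 1,\ t>0 .
\end{equation*}
Equivalently, $t\mapsto \Phi(t)/t^{p}$ is almost increasing. Quasi-convexity of $\Phi^\alpha$ with $\alpha=1/p$ is then exactly the statement that $\Phi^{1/p}(t)/t$ is almost increasing.

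\emph{Step 1: from $\nabla_2$ to a power-type lower bound.} Iterating \eqref{def-nabla2} gives $\Phi(l^k t)\ge (2l)^k\Phi(t)$ for every $k\ge 1$. Given $s\ge1$, choose $k$ with $l^k\le s<l^{k+1}$. Since $\Phi$ is increasing,
\[
\Phi(st)\ge\Phi(l^kt)\ge (2l)^k\Phi(t)=(l^{k})^{p}\Phi(t)\ge l^{-p}s^{p}\Phi(t),
\qquad p:=1+\frac{\log 2}{\log l}>1 .
\]

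\emph{Step 2: (1) implies (2).} Set $\alpha=1/p\in(0,1)$ and $\psi=\Phi^\alpha$. By Step 1, $\psi(st)/(st)\ge c^{\alpha}\psi(t)/t$ for $s\ge1$, so $g(t):=\psi(t)/t$ is almost increasing. Now define the convex minorant
\[
\omega(t):=\int_0^t \inf_{\tau\ge r} g(\tau)\,dr .
\]
The integrand is nondecreasing, so $\omega$ is convex. It also satisfies $\omega(t)\le t\,g(t)=\psi(t)$, because $\inf_{\tau\ge r}g(\tau)\le g(t)$ for $r\le t$. For the reverse bound,
\[
\omega(2t)\ge\int_t^{2t}\inf_{\tau\ge r} g\,dr\ge t\,c'\,g(t)=c'\psi(t).
\]
Hence $\psi(t)\le C\,\omega(2t)$, and with $C_1=\max(C,2)$ and the monotonicity of $\omega$ this gives \eqref{quasiconvexity}. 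Positivity and measurability of the infimum function are routine, since $\Phi>0$ on $(0,\infty)$ for an $N$-function.

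\emph{Step 3: (2) implies (1).} Let $\omega\le\Phi^\alpha\le C_1\omega(C_1\cdot)$ with $\omega$ convex and $\omega(0)=0$. The value $\omega(0)=0$ is forced because $0\le\omega(0)\le\Phi^\alpha(0)=0$ when we take $\omega\ge0$, and negative parts can be truncated by replacing $\omega$ with $\max(\omega,0)$, which is still convex. Convexity with $\omega(0)=0$ gives $\omega(\lambda t)\ge\lambda\,\omega(t)$ for $\lambda\ge1$. Therefore
\[
\Phi^\alpha(\lambda C_1 t)\ge\omega(\lambda C_1 t)\ge\lambda\,\omega(C_1t)\ge \frac{\lambda}{C_1}\Phi^\alpha(t),
\]
and so $\Phi(\lambda C_1 t)\ge (\lambda/C_1)^{1/\alpha}\Phi(t)$. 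Since $1/\alpha>1$, we can choose $\lambda$ large enough that $(\lambda/C_1)^{1/\alpha}\ge 2\lambda C_1$. Setting $l=\lambda C_1$ yields \eqref{def-nabla2}.

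The main obstacle is Step 2, that is, producing an honest convex minorant comparable to $\Phi^\alpha$ from mere almost-monotonicity of $\Phi^\alpha(t)/t$. The explicit integral of the lower envelope is the device meant to handle this; the other steps are routine iterations.
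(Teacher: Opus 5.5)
Your proof is correct and complete. The paper gives no argument of its own for Lemma~\ref{pow2}; it only quotes \cite{VK}. So the useful comparison is with the surrounding machinery of Lemmas~\ref{pow3} and~\ref{cpow}, not with a written proof.

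Step~1 gives the explicit bound $\Phi(st)\ge l^{-p}s^p\Phi(t)$ with $p=1+\log 2/\log l$, so the exponent $\alpha=1/p$ is explicit. Step~3 is the easy direction: after the harmless truncation $\omega\mapsto\max(\omega,0)$, it follows from $\omega(\lambda t)\ge\lambda\,\omega(t)$.

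The real content is the integrated lower envelope $\omega(t)=\int_0^t\inf_{\tau\ge r}g(\tau)\,dr$ in Step~2. It builds a convex minorant directly for the non-convex function $\Phi^{1/p}$, with $\omega\le\Phi^{1/p}\le l\,\omega(2\,\cdot)$. This is exactly where you could not simply invoke the converse half of Lemma~\ref{pow3}, since in the form stated in the paper it is restricted to Young functions.

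A by-product: Step~1 already gives $\Phi(t_1)/t_1^p\le l^p\,\Phi(t_2)/t_2^p$ for $t_1<t_2$. This is \eqref{cpower} with $R=p$ and with no dilation of the argument. So the paper's derivation of Lemma~\ref{cpow} through Lemmas~\ref{pow2} and~\ref{pow3} could be bypassed entirely.

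Two cosmetic remarks:
\begin{itemize}
\item The ``if and only if'' bridge in your opening paragraph is only used in the forward direction; Step~3 does not need its converse.
\item Positivity of $\Phi$ on $(0,\infty)$ plays no role in Step~2. Nonnegativity and monotonicity of the envelope already make $\omega$ finite, nondecreasing and convex.
\end{itemize}
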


\begin{lem}[\text{\cite[Lemma~1.1.1]{VK}}]\label{pow3}
Let $\Phi:[0,\infty)\to [0,\infty)$ be a function. 
\begin{enumerate}
\item 
If $\Phi$ is  quasi-convex, then 
 there exists a constant $\mathfrak{d}>1$ such that
\begin{equation}\label{power3}
\frac{\Phi(t_1)}{t_1}
\le
\frac{\mathfrak{d}\,\Phi(\mathfrak{d}t_2)}{t_2},
\qquad
0<t_1<t_2<\infty.
\end{equation}
\item 
If, in addition, $\Phi$ is a Young function, then the converse also holds.
\end{enumerate}
\end{lem}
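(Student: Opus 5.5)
Let $\omega$ be the convex function and $C_1>0$ the constant given by \eqref{quasiconvexity}. For part (1) I would prove \eqref{power3} with $\mathfrak d=\max\{C_1,2\}$ (any $\mathfrak d\ge C_1$ with $\mathfrak d>1$ works).

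The key tool is monotonicity of difference quotients. Convexity of $\omega$ gives that $s\mapsto(\omega(s)-\omega(0))/s$ is nondecreasing on $(0,\infty)$. Moreover $\omega(0)\le\Phi(0)$, and $\Phi(0)\ge0$, and on the relevant range we may assume $\Phi\ge 0$. So for $0<t_1<s$,
\[
\frac{\omega(t_1)}{t_1}\le \frac{\omega(s)-\omega(0)}{s}+\frac{\omega(0)}{t_1}.
\]
The term $\omega(0)/t_1$ is harmless when $\omega(0)\le 0$. If $\omega(0)>0$, I would replace $\omega$ by $\tilde\omega=\omega-\omega(0)$, which is convex with $\tilde\omega(0)=0$. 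Then $\tilde\omega\le\omega\le\Phi$, and $C_1\tilde\omega(C_1t)\le C_1\omega(C_1t)$. So the hypothesis still holds with $\tilde\omega$, and we may assume $\omega(0)\le 0$ from the start. Then $\omega(s)/s$ is nondecreasing wherever the quotient makes sense, because $(\omega(s)-\omega(0))/s$ is nondecreasing and $-\omega(0)/s$ is nonincreasing. I expect this normalization to be the only delicate point.

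With this in hand, part (1) follows by chaining the inequalities, choosing $s=C_1t_2>t_1$ (assuming $C_1\ge1$; otherwise enlarge $C_1$, which is allowed since $\omega(C_1 t)$ then only... — more carefully, I would first reduce to $C_1\ge 1$ by noting $C_1\omega(C_1t)$ with larger $C_1$ need not dominate, so instead I would take $s=t_2$):
\[
\frac{\Phi(t_1)}{t_1}\le \frac{C_1\omega(C_1t_1)}{t_1}
= C_1^2\frac{\omega(C_1t_1)}{C_1t_1}\le C_1^2\frac{\omega(C_1t_2)}{C_1t_2}
= \frac{C_1\,\omega(C_1t_2)}{t_2}\le \frac{C_1\Phi(C_1t_2)}{t_2}.
\]
The last step uses $\omega\le\Phi$. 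If $C_1<1$, the same chain gives a constant $C_1<1$; replacing it by $\mathfrak d=2$ is then justified by \eqref{lambda}-type monotonicity, since $\Phi$ is a Young function here, hence nondecreasing. In general I would simply assume $C_1\ge1$ without loss of generality.

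For part (2), assume $\Phi$ is a Young function satisfying \eqref{power3}. I would build the convex minorant explicitly. Set $g(t)=\sup_{0<\tau\le t}\Phi(\tau)/\tau$, which is nondecreasing, and put $\omega(t)=\int_0^t g(\tau/\mathfrak d)\,d\tau/\mathfrak d$ or a similar expression. Since $\omega$ is the integral of a nondecreasing function, it is convex.

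For the upper bound, \eqref{power3} gives $g(t)\le \mathfrak d\Phi(\mathfrak d t)/t$. Because the integrand is nondecreasing, $\omega(t)\le t\,g(t)\le \mathfrak d\Phi(\mathfrak dt)$, and after rescaling this gives $\phi\le C_1\omega(C_1 t)$.

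For the lower bound, $\omega(t)\ge\int_{t/2}^t g(\tau)d\tau\ge (t/2)\Phi(t/2)/(t/2)=\Phi(t/2)$, using the monotonicity of $g$. I then need to match the exact form $\omega\le\Phi\le C_1\omega(C_1t)$. I would take $\omega(t):=\int_0^{t}\Phi(\tau/\mathfrak d)/(\mathfrak d\tau)\,d\tau$ adjusted so that $\omega\le\Phi$. Convexity of $\Phi$ gives that $\Phi(\tau)/\tau$ is already nondecreasing, so $\omega(t)=\int_0^t\Phi(\tau)/\tau\,d\tau\le\Phi(t)$ and $\omega(2t)\ge\Phi(t)$.

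The main obstacle is the bookkeeping of constants and the normalization $\omega(0)\le0$ in part (1).
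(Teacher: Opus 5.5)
\textbf{Gap in part (1).} The normalization step fails, and it cannot be repaired as stated because part (1) needs an extra hypothesis. Replacing $\omega$ by $\tilde\omega=\omega-\omega(0)$ keeps the lower bound $\tilde\omega\le\Phi$. But quasi-convexity with $\tilde\omega$ also requires $\Phi(t)\le C_1\tilde\omega(C_1t)$, and the inequality $C_1\tilde\omega(C_1t)\le C_1\omega(C_1t)$ you invoke points the wrong way, so nothing follows.

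In fact part (1) is false when $\Phi(0)>0$. The function $\Phi\equiv1$ is quasi-convex in the sense of \eqref{quasiconvexity} (take $\omega\equiv1$, $C_1=1$), yet \eqref{power3} would force $t_2\le\mathfrak d\,t_1$ for all $0<t_1<t_2$. The missing hypothesis is $\Phi(0)=0$. It holds for $\Phi^\alpha$, the only function to which the paper applies the lemma (in the proof of Lemma~\ref{cpow}); the paper itself gives no proof and only cites \cite{VK}. With $\Phi(0)=0$, your own remark $\omega(0)\le\Phi(0)$ gives $\omega(0)\le0$ for free, so $s\mapsto\omega(s)/s$ is nondecreasing and no normalization is needed.

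Your handling of $C_1<1$ is also flawed, since it uses monotonicity of $\Phi$ as a Young function, which part (1) does not assume. No reduction is needed: for $\mathfrak d=\max\{C_1,2\}$,
\[
\frac{\Phi(t_1)}{t_1}\le C_1^2\,\frac{\omega(C_1t_1)}{C_1t_1}\le C_1^2\,\frac{\omega(\mathfrak d t_2)}{\mathfrak d t_2}\le \frac{C_1^2}{\mathfrak d}\,\frac{\Phi(\mathfrak d t_2)}{t_2}\le \mathfrak d\,\frac{\Phi(\mathfrak d t_2)}{t_2}.
\]
This uses only $C_1t_1<\mathfrak d t_2$, $\omega\le\Phi$ and $\Phi\ge0$.

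\textbf{Part (2).} As stated, part (2) is immediate: a Young function is convex, so $\omega=\Phi$, $C_1=1$ works. Your final choice $\omega(t)=\int_0^t\Phi(\tau)\,\tau^{-1}\,d\tau$ is correct, but it never uses \eqref{power3}; it only re-derives convexity.

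The nontrivial converse, for nonconvex $\Phi\ge0$ such as $\Phi^\alpha$, is what your abandoned first construction was aiming at. There you swapped the roles of the two bounds. An upper bound on $\omega$ yields $\omega\le\Phi$, while a lower bound on $\omega$ yields $\Phi\le C_1\omega(C_1\,\cdot)$. Normalized correctly, take $g(t)=\sup_{0<\tau<t}\Phi(\tau)/\tau$ and $\omega(t)=\mathfrak d^{-1}\int_0^{t/\mathfrak d}g(\tau)\,d\tau$. Then \eqref{power3} gives $g(t/\mathfrak d)\le\mathfrak d^{2}\Phi(t)/t$, hence $\omega\le\Phi$. Also $\omega(t)\ge\mathfrak d^{-1}\Phi(t/(2\mathfrak d))$, hence $\Phi(s)\le 2\mathfrak d\,\omega(2\mathfrak d s)$ for $s>0$. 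Finally, $\omega$ is convex as the integral of a nondecreasing function.
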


In what follows, we present, for the reader's convenience, some of the main consequences of the $\Delta_2$- and $\nabla_2$-conditions for a Young function $\Phi$ separately (see \cite{AG,VK}). To this end, we first recall the definition of a quasi-increasing function.

A function $g:(0,\infty)\to\mathbb{R}$ is called \emph{quasi-increasing} if there exists a constant $C_2>0$ such that
\begin{equation}\label{quasincr}
g(t_1)\leq C_2\,g(t_2), \qquad \forall \ 0<t_1<t_2<\infty.
\end{equation}
If $\Phi$ is a Young function, then the function
$ t\mapsto \Phi(t)/t$ 
is increasing on $(0,\infty)$ and, therefore, quasi-increasing with constant $C_2=1$. Applying \eqref{quasincr} to this function with $t_1=t>0$ and $t_2=Ct$, where $C>1$, we obtain
 the quasi-convex growth estimate
\begin{equation}\label{CCC}
\Phi(t)\leq \frac{1}{C}\,\Phi(Ct), \qquad \forall\, t>0.
\end{equation}

\begin{lem}[\cite{KR}]\label{pow}
Let $\Phi\in\Delta_2$. Then, there exist constants $P>1$ and $\mathfrak{b}>1$ such that
\begin{equation}\label{power}
\frac{\Phi(t_2)}{t_2^P}
\le
\mathfrak{b}\,\frac{\Phi(t_1)}{t_1^P},
\qquad
0<t_1<t_2<\infty.
\end{equation}
Moreover, the constants $P$ and $\mathfrak{b}$ depend only on the $\Delta_2$-constant of $\Phi$.
\end{lem}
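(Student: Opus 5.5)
The idea is to iterate the $\Delta_2$-condition \eqref{def-delta2} dyadically, which gives polynomial growth of $\Phi$ with exponent $P=\log_2\mu$.

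Fix $0<t_1<t_2<\infty$ and let $k\ge 0$ be the unique integer with $2^k t_1\le t_2<2^{k+1}t_1$. Applying \eqref{def-delta2} $k+1$ times, and using that $\Phi$ is nondecreasing (convex on $[0,\infty)$ with $\Phi(0)=0$ and $\Phi\ge0$), we get
$$
\Phi(t_2)\le \Phi(2^{k+1}t_1)\le \mu^{k+1}\Phi(t_1).
$$

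Next we convert the power of $\mu$ into a power of the ratio $t_2/t_1$. Set $P:=\log_2\mu$, so that $\mu=2^P$. Since $\Phi$ is an $N$-function it is not identically zero, hence $\mu>1$ and $P>0$. Then
$$
\mu^{k+1}=2^{P(k+1)}=2^P\,(2^k)^P\le 2^P\left(\frac{t_2}{t_1}\right)^P=\mu\left(\frac{t_2}{t_1}\right)^P .
$$
Combining the two displays and dividing by $t_2^P$ yields \eqref{power} with $\mathfrak b=\mu>1$.

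The statement requires $P>1$, so we must check that the exponent can be taken larger than $1$.
\begin{itemize}
\item If $\log_2\mu\le1$, it is harmless to enlarge $P$: since $(t_1/t_2)^{P'-P}\le1$ for $P'\ge P$, the inequality $\Phi(t_2)/t_2^{P}\le\mathfrak b\,\Phi(t_1)/t_1^{P}$ implies the same inequality with exponent $P'$. So we may take $P=\max\{\log_2\mu,2\}$.
\item Alternatively, convexity together with $\Phi(0)=0$ gives $\Phi(2t)\ge2\Phi(t)$, which forces $\mu\ge2$. For an $N$-function, $\mu=2$ would make $\Phi$ essentially linear along dyadic scales, contradicting \eqref{Nfunction}; in any case the enlargement above avoids this issue entirely.
\end{itemize}

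The only points needing care are the monotonicity of $\Phi$ used in the first step and the choice of $P>1$; both are routine. The resulting constants $P$ and $\mathfrak b$ depend only on $\mu$, as claimed.
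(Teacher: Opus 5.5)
Your proof is correct and complete. The chain $\Phi(t_2)\le\Phi(2^{k+1}t_1)\le\mu^{k+1}\Phi(t_1)\le\mu\,(t_2/t_1)^{\log_2\mu}\Phi(t_1)$ uses only the monotonicity of $\Phi$ and the global $\Delta_2$-condition. Since the exponent can be enlarged (because $(t_1/t_2)^{P'-P}\le1$), this gives \eqref{power} with $\mathfrak b=\mu$ and $P=\max\{\log_2\mu,2\}$, both depending only on $\mu$.

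The paper gives no proof of this lemma and simply refers to \cite{KR}. A classical alternative to your dyadic iteration uses the right derivative $\phi$ of $\Phi$. From $t\phi(t)\le\int_t^{2t}\phi(s)\,ds\le\Phi(2t)\le\mu\Phi(t)$ one gets $(\log\Phi)'\le\mu/t$. Integrating yields $\Phi(t_2)/t_2^{\mu}\le\Phi(t_1)/t_1^{\mu}$, which is \eqref{power} with any $\mathfrak b\ge1$ but with the cruder exponent $P=\mu$. Your route is more elementary, since it needs neither the integral representation of $\Phi$ nor absolute continuity of $\log\Phi$. It also gives the better exponent $\log_2\mu$, at the price of the factor $\mathfrak b=\mu$.

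One small tidy-up of your second bullet. Since $t\mapsto\Phi(t)/t$ is nondecreasing, $\mu=2$ would force $\Phi(2t)=2\Phi(t)$ for all $t$. Then $\Phi(t)/t$ would be constant on every $[t,2t]$, hence constant on $(0,\infty)$, which contradicts \eqref{Nfunction}. So for $N$-functions one already has $\log_2\mu>1$, and enlarging $P$ is not even needed.
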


In other words, Lemma~\ref{pow} asserts that there exists $P>1$ such that the function
$  t\mapsto \Phi(t)/t^P$ 
is \emph{quasi-decreasing}. Consequently, for every $t_0>0$,
$$
\frac{\Phi(t)}{t^P}
\leq \mathfrak b\,\frac{\Phi(t_0)}{t_0^P}, \qquad  t_0<t<\infty,
$$
and hence the function $t\mapsto \Phi(t)/t^P$ is bounded on $[t_0,\infty)$.

Motivated by this result, we now establish an analogous property for Young functions satisfying the $\nabla_2$-condition.

\begin{lem}\label{cpow}
Let $\Phi\in\nabla_2$. Then, there exist constants $R>1$ and $\mathfrak{a}>1$ such that
\begin{equation}\label{cpower}
\frac{\Phi(t_1)}{t_1^R} \leq
\mathfrak{a}\,\frac{\Phi(\mathfrak{a}t_2)}{t_2^R},
\qquad
0<t_1<t_2<\infty.
\end{equation}
\end{lem}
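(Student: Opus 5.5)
The plan is to combine Lemma~\ref{pow2} with Lemma~\ref{pow3}. Since $\Phi\in\nabla_2$, Lemma~\ref{pow2} gives some $\alpha\in(0,1)$ such that $\Phi^\alpha$ is quasi-convex. Applying part (1) of Lemma~\ref{pow3} to the quasi-convex function $\Phi^\alpha$ yields a constant $\mathfrak d>1$ with
\[
\frac{\Phi(t_1)^\alpha}{t_1}\le \frac{\mathfrak d\,\Phi(\mathfrak d t_2)^\alpha}{t_2},\qquad 0<t_1<t_2<\infty .
\]
Both sides are nonnegative, so we may raise the inequality to the power $1/\alpha>1$. This gives
\[
\frac{\Phi(t_1)}{t_1^{1/\alpha}}\le \mathfrak d^{1/\alpha}\,\frac{\Phi(\mathfrak d t_2)}{t_2^{1/\alpha}} .
\]
We then set $R:=1/\alpha>1$.

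It remains to merge the two constants $\mathfrak d^{R}$ and $\mathfrak d$ into a single $\mathfrak a$. Take $\mathfrak a:=\mathfrak d^{R}\ge \mathfrak d>1$. Because $\Phi$ is convex with $\Phi(0)=0$, it is nondecreasing, so $\Phi(\mathfrak d t_2)\le\Phi(\mathfrak a t_2)$. Substituting this into the previous inequality gives exactly \eqref{cpower}.

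The only delicate point is the quasi-convexity input. Lemma~\ref{pow3}(1) needs only that $\Phi^\alpha$ is quasi-convex, not that it is a Young function, so it applies without further checks. If one wanted to avoid Lemma~\ref{pow2}, the alternative would be to iterate \eqref{def-nabla2}: $\Phi(l^k t)\ge (2l)^k\Phi(t)$. This yields the required growth along the geometric sequence $l^k$, with $R=1+\log 2/\log l$. One would then pass to arbitrary $t_2$ using monotonicity, which costs a dilation factor absorbed into $\mathfrak a$. I would present the first argument and keep the second as a check.
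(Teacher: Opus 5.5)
Your proposal is correct and follows the paper's proof essentially verbatim. Lemma~\ref{pow2} gives the quasi-convexity of $\Phi^\alpha$, Lemma~\ref{pow3}(1) applied to $\Phi^\alpha$ and raised to the power $1/\alpha$ gives the bound with $R=1/\alpha$, and the choice $\mathfrak a=\mathfrak d^{1/\alpha}$ together with the monotonicity of $\Phi$ finishes it. Your fallback via iterating \eqref{def-nabla2} is also sound: choosing $k\ge 0$ with $l^k t_1\le t_2<l^{k+1}t_1$ gives \eqref{cpower} directly with $\mathfrak a=l$ and $R=1+\log 2/\log l$.
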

\begin{proof}
By Lemma~\ref{pow2}, there exists $\alpha\in(0,1)$ such that $\Phi^\alpha$ is quasi-convex. Applying Lemma~\ref{pow3} to the function $\Phi^\alpha$, we conclude that there exists a constant $\mathfrak{d}>1$ such that
$$
\frac{\Phi^\alpha(t_1)}{t_1}
\leq \mathfrak{d}\,
\frac{\Phi^\alpha(\mathfrak{d}t_2)}{t_2},
\qquad 0<t_1<t_2<\infty.
$$
Raising both sides to the power $1/\alpha$, we obtain
$$
\frac{\Phi(t_1)}{t_1^{1/\alpha}}
\leq  \mathfrak{d}^{1/\alpha}\,
\frac{\Phi(\mathfrak{d}t_2)}{t_2^{1/\alpha}},
\qquad 0<t_1<t_2<\infty.
$$
Setting 
$ R=\frac{1}{\alpha}>1, $  $
\mathfrak{a}=\mathfrak{d}^{1/\alpha}>1, $
and using the monotonicity of $\Phi$, we obtain \eqref{cpower}.
\end{proof}

In particular, Lemma~\ref{cpow} asserts that there exists $R>1$ such that the function
$t\mapsto \Phi(t)/t^R$ 
is \emph{quasi-increasing}, in the sense of \eqref{quasincr}, up to a multiplicative dilation of the argument. Moreover, \eqref{cpower} ensures boundedness of the ratio   $\Phi(t)/t^R$ on every interval $(0,t_0].$ 

Moreover,  we have that  $1<R\leq P<\infty.$ Indeed, fixing  $t_1>0$, applying first  \eqref{cpower}, and then  \eqref{power} to the pair  $(t_1, \mathfrak{a}t_2)$,  after simplification,  we obtain
$$
t_2^{R-P}\leq \mathfrak{a}^{P+1}\mathfrak{b}\,t_1^{R-P}, \qquad \forall \ t_2>t_1>0.
$$
Because of the arbitrary of $t_2$ the above inequality holds if $R-P\leq  0.$

For our purposes (see Section~\ref{sec4}), we also require alternative integral characterizations of the $\nabla_2$- and $\Delta_2$-conditions in terms of Hardy-type inequalities.

\begin{lem}\label{lem-nabladelta}
Let $\Phi\in\Delta_2\cap\nabla_2$. Then, there exist exponents $  1<r<p<\infty$  such that
\begin{align}
\label{nabla2}
\int_0^t \frac{d\Phi(s)}{s^r}
&\leq C_r\,\frac{\Phi(t)}{t^r},\\[5pt]
\label{delta2}
\int_t^\infty \frac{d\Phi(s)}{s^p}
&\leq  C_p\,\frac{\Phi(t)}{t^p}. 
\end{align}
\end{lem}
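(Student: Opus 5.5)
The plan is to combine the power-type bounds of Lemma~\ref{cpow} and Lemma~\ref{pow} with an integration by parts in the Stieltjes integrals. By Lemma~\ref{cpow} there are $R>1$ and $\mathfrak a>1$ such that $\Phi(s)/s^R\le \mathfrak a\,\Phi(\mathfrak a t)/t^R$ for $s<t$. By Lemma~\ref{pow} there are $P\ge R$ and $\mathfrak b$ such that $\Phi(s)/s^P\le\mathfrak b\,\Phi(t)/t^P$ for $s>t$. We choose $r\in(1,R)$ and $p\in(P,\infty)$, so that $1<r<R\le P<p$ and in particular $r<p$. The strict gaps $r<R$ and $P<p$ are what make the resulting integrals converge.

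For \eqref{nabla2}, fix $0<\varepsilon<t$ and integrate by parts:
\[
\int_\varepsilon^t\frac{d\Phi(s)}{s^r}=\frac{\Phi(t)}{t^r}-\frac{\Phi(\varepsilon)}{\varepsilon^r}+r\int_\varepsilon^t\frac{\Phi(s)}{s^{r+1}}\,ds .
\]
Applying Lemma~\ref{cpow} with $t_1=s$ and $t_2=t$ gives $\Phi(s)\le \mathfrak a\, s^R\,\Phi(\mathfrak a t)/t^R$. Hence
\[
\int_0^t \frac{\Phi(s)}{s^{r+1}}\,ds\le \frac{\mathfrak a\,\Phi(\mathfrak a t)}{t^R}\int_0^t s^{R-r-1}\,ds=\frac{\mathfrak a}{R-r}\,\frac{\Phi(\mathfrak a t)}{t^r}.
\]
The same bound shows $\Phi(\varepsilon)/\varepsilon^r\le C\varepsilon^{R-r}\to0$ as $\varepsilon\to0$, so the boundary term at $0$ vanishes. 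Finally, by \eqref{lambda} (the $\Delta_2$-condition) we have $\Phi(\mathfrak a t)\le\mu_{\mathfrak a}\Phi(t)$. This yields \eqref{nabla2} with $C_r=1+r\mathfrak a\mu_{\mathfrak a}/(R-r)$.

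For \eqref{delta2}, fix $M>t$ and integrate by parts on $[t,M]$:
\[
\int_t^M\frac{d\Phi(s)}{s^p}=\frac{\Phi(M)}{M^p}-\frac{\Phi(t)}{t^p}+p\int_t^M\frac{\Phi(s)}{s^{p+1}}\,ds\le \frac{\Phi(M)}{M^p}+p\int_t^M\frac{\Phi(s)}{s^{p+1}}\,ds .
\]
Lemma~\ref{pow} with $t_1=t$ and $t_2=s$ gives $\Phi(s)\le\mathfrak b\,s^P\,\Phi(t)/t^P$. It follows that $\Phi(M)/M^p\le \mathfrak b\,M^{P-p}\Phi(t)/t^P\to0$ as $M\to\infty$, and
\[
\int_t^\infty \frac{\Phi(s)}{s^{p+1}}\,ds\le \frac{\mathfrak b}{p-P}\,\frac{\Phi(t)}{t^p}.
\]
This gives \eqref{delta2} with $C_p=p\mathfrak b/(p-P)$.

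The main point requiring care is the $\nabla_2$ side. Lemma~\ref{cpow} only provides a quasi-increasing property up to the dilation $\mathfrak a$, so the $\Delta_2$ consequence \eqref{lambda} is needed to return from $\Phi(\mathfrak a t)$ to $\Phi(t)$. This is the reason both conditions are assumed.

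A minor technical point is justifying the Stieltjes integration by parts. Since $\Phi$ is convex and increasing, it is absolutely continuous on compact subintervals of $(0,\infty)$, with $d\Phi=\Phi'(s)\,ds$. The formula therefore holds on every $[\varepsilon,M]\subset(0,\infty)$, and the limits $\varepsilon\to0$ and $M\to\infty$ are justified by monotone convergence, since $d\Phi\ge0$.
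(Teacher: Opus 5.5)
Your proposal is correct and follows essentially the same route as the paper: you choose $1<r<R$ and $P<p<\infty$, integrate by parts, and bound $\Phi(s)/s^{r+1}$ and $\Phi(s)/s^{p+1}$ via Lemmas~\ref{cpow} and~\ref{pow}, using \eqref{lambda} to pass from $\Phi(\mathfrak a t)$ back to $\Phi(t)$. The only differences are refinements. You truncate at $\varepsilon$ and $M$ and use monotone convergence, which makes the boundary terms rigorous. You also discard the negative term $-\Phi(t)/t^p$ rather than bounding it by $+\Phi(t)/t^p$, so you get the slightly sharper constant $C_p=p\mathfrak b/(p-P)$ instead of the paper's $1+p\mathfrak b/(p-P)$.
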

\begin{proof}

The proof is based on classical Hardy-type arguments under the
$\Delta_2$- and $\nabla_2$-conditions.

Let $R$ and $P$ be respectively the exponents given by Lemmas~\ref{cpow} and~\ref{pow}, and fix $1<r<R$.

Since
$\Phi\in\nabla_2$, Lemma~\ref{cpow} implies that
$$
\lim_{s\to0_+}\frac{\Phi(s)}{s^r}=\lim_{s\to0_+}
\left(\frac{\Phi(s)}{s^R}\right)s^{R-r}=0.
$$
Integrating by parts and using \eqref{lambda} and \eqref{cpower}, we obtain
\allowdisplaybreaks
\begin{align*}
\int_0^t \frac{d\Phi(s)}{s^r}
&=\frac{\Phi(t)}{t^r} +
r\int_0^t\frac{\Phi(s)}{s^{r+1}}\,ds  \\
&=\frac{\Phi(t)}{t^r} + r\int_0^t
\frac{\Phi(s)}{s^R}\,
\frac{ds}{s^{\,r-R+1}} \\
&\leq \frac{\Phi(t)}{t^r} +
r\,\frac{\mathfrak a\,\Phi(\mathfrak a t)}{t^R}
\int_0^t s^{\,R-r-1} \, ds \\
&\leq \left(
1+\frac{r\,\mathfrak a\mu_{\mathfrak a}}{R-r}
\right)
\frac{\Phi(t)}{t^r}=:C_r\frac{\Phi(t)}{t^r}.
\end{align*}

Analogously, let $P < p < \infty$. Since $\Phi\in\Delta_2$, Lemma~\ref{pow} yields
$$
\lim_{s\to\infty}\frac{\Phi(s)}{s^p}=
\lim_{s\to\infty}
\left(\frac{\Phi(s)}{s^P}\right)\frac1{s^{p-P}}=0.
$$
Integrating by parts and using \eqref{power}, we obtain
\begin{align*}
\int_t^\infty\frac{d\Phi(s)}{s^p}
&= -\frac{\Phi(t)}{t^p}+ p\int_t^\infty
\frac{\Phi(s)}{s^{p+1}}\, ds  \\ &= -\frac{\Phi(t)}{t^p}+ p\int_t^\infty
\frac{\Phi(s)}{s^P}\, \frac{ds}{s^{\,p-P+1}}\\
&\leq 
\frac{\Phi(t)}{t^p} + p\,\mathfrak b\, \frac{\Phi(t)}{t^P} \int_t^\infty s^{P-p-1}\,ds \\
&= \left(1+\frac{p\,\mathfrak b}{p-P} \right)
\frac{\Phi(t)}{t^p} =:C_p\frac{\Phi(t)}{t^p}.
\end{align*}
\end{proof}

Related Hardy-type integral inequalities under the $\Delta_2$- and $\nabla_2$-conditions can be found in~\cite{Ci2}.

\begin{crlr}\label{cor1}
If \eqref{nabla2} holds for some $1<r<\infty$, then it also holds for every
$1<r_1<r$. Similarly, if \eqref{delta2} holds for some $1<p<\infty$, then it also holds for every $p_1>p$.
\end{crlr}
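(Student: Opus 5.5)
The idea is to compare the integrands directly, using only that $\Phi$ is nondecreasing, so that $d\Phi$ is a nonnegative measure on $(0,\infty)$.

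\emph{The $\nabla_2$-type inequality.} Suppose \eqref{nabla2} holds for some $r$, and let $1<r_1<r$. For $0<s\le t$ we have $s^{r-r_1}\le t^{r-r_1}$. Equivalently,
$$
\frac{1}{s^{r_1}}=\frac{s^{r-r_1}}{s^{r}}\le \frac{t^{r-r_1}}{s^{r}} .
$$
Integrating this against the nonnegative measure $d\Phi$ over $(0,t]$ and then applying \eqref{nabla2} gives
$$
\int_0^t\frac{d\Phi(s)}{s^{r_1}}\le t^{r-r_1}\int_0^t\frac{d\Phi(s)}{s^{r}}\le C_r\,t^{r-r_1}\frac{\Phi(t)}{t^{r}}=C_r\frac{\Phi(t)}{t^{r_1}}.
$$
So the constant can be taken as $C_{r_1}=C_r$.

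\emph{The $\Delta_2$-type inequality.} Suppose \eqref{delta2} holds for some $p$, and let $p_1>p$. For $s\ge t$ we have $s^{-(p_1-p)}\le t^{-(p_1-p)}$, so
$$
s^{-p_1}\le t^{p-p_1}s^{-p}.
$$
Integrating over $[t,\infty)$ against $d\Phi$ and using \eqref{delta2},
$$
\int_t^\infty\frac{d\Phi(s)}{s^{p_1}}\le t^{p-p_1}\int_t^\infty\frac{d\Phi(s)}{s^{p}}\le C_p\,t^{p-p_1}\frac{\Phi(t)}{t^{p}}=C_p\frac{\Phi(t)}{t^{p_1}}.
$$
Again the constant can be taken as $C_{p_1}=C_p$.

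\emph{The only subtle point.} We must check that the integrals are well defined as Lebesgue--Stieltjes integrals with a nonnegative measure. This holds because $\Phi$ is convex, nondecreasing and continuous on $[0,\infty)$, being an $N$-function. Also, if the near-zero integral in \eqref{nabla2} is finite for $r$, it is finite for every smaller exponent by the pointwise comparison above. Similarly, finiteness of the tail integral in \eqref{delta2} for $p$ gives finiteness for every larger exponent. No integration by parts is needed, so no boundary-term analysis arises.
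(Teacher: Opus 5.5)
Your proof is correct and is exactly the paper's argument. The pointwise bounds $s^{r-r_1}\le t^{r-r_1}$ on $(0,t]$ and $s^{p-p_1}\le t^{p-p_1}$ on $[t,\infty)$, integrated against the nonnegative measure $d\Phi$, give both inequalities with the same constants $C_r$ and $C_p$; your remark on well-definedness of the Lebesgue--Stieltjes integrals is a harmless addition that the paper leaves implicit.
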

\begin{proof}
Let $1<r_1<r$. Since $s^{r-r_1}\leq t^{r-r_1}$ for every $0<s\leq t$, we obtain
$$
\int_0^t\frac{d\Phi(s)}{s^{r_1}}
= \int_0^t s^{\,r-r_1}\,
\frac{d\Phi(s)}{s^r}
\leq t^{\,r-r_1}
\int_0^t\frac{d\Phi(s)}{s^r}
\leq C_r\,t^{\,r-r_1}\frac{\Phi(t)}{t^r} =
C_r\,\frac{\Phi(t)}{t^{r_1}}.
$$

Similarly, let $p_1>p$. Since $s^{p-p_1}\leq t^{p-p_1}$ for every $s\geq t$, we have
$$
\int_t^\infty\frac{d\Phi(s)}{s^{p_1}}
\leq t^{\,p-p_1}
\int_t^\infty\frac{d\Phi(s)}{s^p}
\leq C_p\,\frac{\Phi(t)}{t^{p_1}}.
$$
\end{proof}

Finally, we recall the basic properties of Orlicz spaces and present several fundamental results that will be used throughout the paper  (see, for instance, \cite[Ch.~IX]{RR2}). As a natural generalization of the classical Lebesgue spaces, Orlicz spaces provide a flexible framework for describing non-standard growth and play a fundamental role in the analysis of integral operators and partial differential equations with non-standard growth conditions.

The \emph{complementary Young function}
$\Psi:[0,\infty)\to[0,\infty)$ associated with $\Phi$ is defined by
$$
\Psi(y):=\sup_{x\geq 0}\bigl\{xy-\Phi(x)\bigr\},
\qquad y\geq 0.
$$
The function $\Psi$ shares the main qualitative properties of $\Phi$: it is non-negative, convex,  increasing, and satisfies analogous growth conditions at both the origin and infinity. Moreover, for every pair of complementary Young functions $(\Phi,\Psi)$, Young's inequality holds:
\begin{equation}\label{disY}
xy\le\Phi(x)+\Psi(y), \qquad x,y\geq 0.
\end{equation}
The \emph{Orlicz class} $\bar{L}^\Phi(\mathbb{R}^n)$ consists of all measurable functions
$f:\mathbb{R}^n\to\mathbb{R}$ whose  \emph{Orlicz modular}
$$
\rho_\Phi(f):=\int_{\mathbb{R}^n}\Phi(|f(x)|)\,dx
$$
is finite.

\begin{defin}\label{Orliczspace}
The \emph{Orlicz space} $L^\Phi(\mathbb{R}^n)$ consists of all measurable functions
$f:\mathbb{R}^n\to\mathbb{R}$ such that
$\rho_\Phi(f/\lambda)<\infty $ for some $\lambda>0$.
It is equipped with the \emph{Luxemburg norm}
\begin{equation}\label{Lux}
\|f\|_{L^\Phi(\mathbb{R}^n)}
:=\inf\left\{\lambda>0:
\rho_\Phi\!\left(\frac{f}{\lambda}\right)\leq 1
\right\}.
\end{equation}
\end{defin}

\begin{prp}[Jensen's Inequality {\rm\cite[Proposition~5]{RR}}]\label{JensInOrl}
Let $\Omega\subset\mathbb{R}^n$ be a measurable set with $|\Omega|<\infty$, and let
$f\in L^\Phi(\Omega)$. Then
\begin{equation}\label{JensOrl}
\Phi\left(
\frac{1}{|\Omega|}
\int_\Omega |f(x)|\,dx
\right)
\le
\frac{1}{|\Omega|}
\int_\Omega \Phi(|f(x)|)\,dx.
\end{equation}
\end{prp}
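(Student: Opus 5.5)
The statement is Jensen's inequality for the convex function $\Phi$ with respect to the normalized measure $d\nu = dx/|\Omega|$ on $\Omega$. The plan is to use only the convexity of $\Phi$ through a supporting line at the mean value. Before that, we dispose of trivial cases.

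\textbf{Trivial cases.} If $\int_\Omega \Phi(|f|)\,dx=\infty$, the inequality is immediate. So we may assume $f$ belongs to the Orlicz class on $\Omega$. Since $\Phi$ is an $N$-function, \eqref{Nfunction} gives $\Phi(t)\ge t$ for all $t\ge t_0$, for some $t_0$. Hence
$$\int_\Omega |f|\,dx\le t_0|\Omega|+\int_\Omega \Phi(|f|)\,dx<\infty.$$
Thus the mean
$$m:=\frac{1}{|\Omega|}\int_\Omega |f(x)|\,dx$$
is a finite nonnegative number. Alternatively, $f\in L^\Phi(\Omega)\subset L^1(\Omega)$ on sets of finite measure, by Young's inequality \eqref{disY} applied to $|f|/\lambda$ and the constant $1$. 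Either route shows the left-hand side is well defined.

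\textbf{Main step: a supporting line at $m$.} Since $\Phi$ is convex and finite on $[0,\infty)$, it admits a supporting line at $m$. That is, there is $k\ge 0$ with
$$\Phi(t)\ge \Phi(m)+k(t-m)\quad\text{for all } t\ge 0.$$
For $m>0$ one may take $k$ to be the right derivative $\Phi'_+(m)$, which is finite because $\Phi$ is finite on a neighbourhood of $m$. Then $k\ge 0$, since $\Phi$ is nondecreasing ($\Phi\ge0$, $\Phi(0)=0$, convex). For $m=0$ we have $f=0$ a.e., and both sides vanish.

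\textbf{Conclusion.} Substitute $t=|f(x)|$ and integrate over $\Omega$ with respect to $dx/|\Omega|$. The linear term integrates to $k(m-m)=0$, which leaves exactly \eqref{JensOrl}. Measurability of $\Phi(|f|)$ follows from the continuity of $\Phi$, which is convex and finite.

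\textbf{Expected obstacle.} The only delicate point is ensuring that $m$ is finite and lies in the interior of the domain of $\Phi$, so that a finite supporting slope exists. This is handled by the integrability remark above, together with the finiteness of $\Phi$ on all of $[0,\infty)$. The rest is the standard one-line Jensen argument.
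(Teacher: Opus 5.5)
Your proposal is correct. After reducing to $\int_\Omega\Phi(|f|)\,dx<\infty$, the $N$-function growth gives $f\in L^1(\Omega)$, so $m$ is finite, and integrating the supporting-line inequality at $m$ then yields \eqref{JensOrl}. The paper gives no proof of this statement and only cites \cite[Proposition~5]{RR}; your supporting-line argument is the standard proof of the cited result, so there is no different route to compare.
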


The following Jensen-type inequality involving the Hardy--Littlewood maximal operator will play an important role in the sequel.

\begin{prp}\label{JensWMax}
Let $f\in L^\Phi(\mathbb{R}^n)$. Then, for almost every $x\in\mathbb{R}^n$,
$$
\Phi(\mathcal{M}f(x))
\leq \mathcal{M}\bigl(\Phi(|f|)\bigr)(x),
$$
where $\mathcal{M}$ denotes the Hardy--Littlewood maximal operator.
\end{prp}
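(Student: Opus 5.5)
The plan is to apply Jensen's inequality (Proposition~\ref{JensInOrl}) on each ball centred at $x$ and then take the supremum over radii, using that $\Phi$ is continuous and nondecreasing. First I check that both sides make sense. Since $\Phi\in\Delta_2$, the Orlicz space coincides with the Orlicz class, so $\Phi(|f|)\in L^1(\R^n)$. Hence $\M(\Phi(|f|))$ is well defined, with values in $[0,\infty]$.

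Local integrability of $f$ itself follows from convexity. Take $\lambda>0$ with $\rho_\Phi(f/\lambda)<\infty$. On any ball $\B$, Proposition~\ref{JensInOrl} applied to $f/\lambda$ gives
\[
\Phi\Bigl(\Xint-_{\B}|f|/\lambda\Bigr)\le \Xint-_{\B}\Phi(|f|/\lambda)<\infty .
\]
Since $\Phi(t)\to\infty$ as $t\to\infty$, the average $\Xint-_{\B}|f|$ is finite. So $\M f(x)$ is defined for every $x$, possibly equal to $+\infty$.

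Now fix $x$ and $r>0$, and set $\B=\B_r(x)$. Applying Proposition~\ref{JensInOrl} with $\Omega=\B$ gives
\[
\Phi\Bigl(\Xint-_{\B_r(x)}|f(z)|\,dz\Bigr)\le \Xint-_{\B_r(x)}\Phi(|f(z)|)\,dz\le \M(\Phi(|f|))(x).
\]
The right-hand side does not depend on $r$, so I take the supremum over $r>0$ on the left.

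The only delicate point is commuting $\sup_r$ with $\Phi$, that is, showing
\[
\sup_r \Phi(A_r)=\Phi\Bigl(\sup_r A_r\Bigr), \qquad A_r:=\Xint-_{\B_r(x)}|f(z)|\,dz .
\]
$\Phi$ is convex and finite on $[0,\infty)$ (it is an $N$-function), so it is continuous there, and it is nondecreasing because $\Phi(0)=0\le\Phi$. I choose $r_k$ with $A_{r_k}\uparrow \M f(x)$. If $\M f(x)<\infty$, continuity gives $\Phi(A_{r_k})\to\Phi(\M f(x))$. If $\M f(x)=\infty$, then $\Phi(A_{r_k})\to\infty$ since $\Phi(t)\to\infty$, and I use the convention $\Phi(\infty)=\infty$. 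In both cases
\[
\Phi(\M f(x))=\lim_k \Phi(A_{r_k})\le \M(\Phi(|f|))(x).
\]
This argument actually gives the inequality at every $x$, which is stronger than the almost-everywhere statement claimed.
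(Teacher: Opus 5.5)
Your proof is correct and follows the paper's argument: you apply Jensen's inequality (Proposition~\ref{JensInOrl}) on each ball $\B_r(x)$, then take the supremum over $r>0$ using the continuity and monotonicity of $\Phi$. Your extra checks fill in details the paper leaves implicit: local integrability of $f$, integrability of $\Phi(|f|)$ under $\Delta_2$, and the case $\M f(x)=\infty$. You are also right that the inequality actually holds at every $x$.
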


\begin{proof}
For every ball $B_r(x)\subset\mathbb{R}^n$, Jensen's inequality
\eqref{JensOrl} yields
$$
\Phi\left( \frac1{|B_r(x)|}
\int_{B_r(x)}|f(z)|\,dz \right) \leq \frac1{|B_r(x)|}\int_{B_r(x)}\Phi(|f(z)|)\,dz.
$$
Taking the supremum over all $r>0$, and using the continuity and monotonicity of $\Phi$, we obtain
\begin{align*}
\Phi(\mathcal{M}f(x))
&=\sup_{r>0} \Phi\left( \frac1{|B_r(x)|}
\int_{B_r(x)}|f(z)|\,dz \right)\\
&\leq \sup_{r>0}\frac1{|B_r(x)|}
\int_{B_r(x)}\Phi(|f(z)|)\,dz\\
&= \mathcal{M}\bigl(\Phi(|f|)\bigr)(x),
\end{align*}
which completes the proof.
\end{proof}

We recall some classical boundedness properties of the Hardy--Littlewood maximal
operator on Orlicz spaces, which play a fundamental role in the study of
regularity and integral estimates under non-standard growth conditions (see, for instance,
\cite{RR,RR2}).

\begin{thm}[Weak-type inequality]\label{key1.1}
Let $\Phi$ be a Young function. Then, there exists a constant $C>0$ such that, for every $\alpha>0$ and every $f\in L^\Phi(\R^n)$, the Hardy--Littlewood
maximal operator $\M$ satisfies the weak-type $(\Phi,\Phi)$ estimate
\begin{equation}\label{weakorlicz}
\left|\left\{x\in\R^n:\M f(x)>\alpha\right\}\right|
\le
\frac{C}{\Phi(\alpha)}
\int_{\R^n}\Phi(|f(x)|)\,dx,
\end{equation}
where the constant $C$ is independent of both $f$ and $\alpha$.
\end{thm}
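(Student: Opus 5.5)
The plan is to combine the pointwise Jensen-type inequality of Proposition~\ref{JensWMax} with the classical weak-type $(1,1)$ estimate \eqref{weak1} for $\M$. Fix $\alpha>0$ and $f\in L^\Phi(\R^n)$. If $\int_{\R^n}\Phi(|f|)\,dx=\infty$, there is nothing to prove, so we assume that $g:=\Phi(|f|)\in L^1(\R^n)$.

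\textbf{Step 1: reduce the level set of $\M f$ to one of $\M g$.} Since $\Phi$ is an $N$-function, it is continuous and strictly increasing on $[0,\infty)$, with $\Phi(\alpha)>0$. Hence $\M f(x)>\alpha$ implies $\Phi(\M f(x))>\Phi(\alpha)$. By Proposition~\ref{JensWMax}, $\Phi(\M f(x))\le \M g(x)$ for a.e.\ $x$. Therefore, up to a null set,
$$
\{x\in\R^n:\M f(x)>\alpha\}\subset\{x\in\R^n:\M g(x)>\Phi(\alpha)\}.
$$

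\textbf{Step 2: apply the weak-type $(1,1)$ bound.} Applying \eqref{weak1} to $g$ with $t=\Phi(\alpha)$ gives
$$
\bigl|\{\M f>\alpha\}\bigr|\le \bigl|\{\M g>\Phi(\alpha)\}\bigr|\le \frac{C(n)}{\Phi(\alpha)}\int_{\R^n}\Phi(|f(x)|)\,dx .
$$
This is \eqref{weakorlicz} with $C=C(n)$, which is independent of $f$, of $\alpha$, and even of $\Phi$.

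\textbf{Main obstacle.} The only delicate points are justifying Proposition~\ref{JensWMax} when $g$ is merely locally integrable, and the strict monotonicity of $\Phi$. For a general Young function that is not an $N$-function, $\Phi$ may vanish on an initial interval $[0,t_0]$. For $\alpha\le t_0$ we then have $\Phi(\alpha)=0$, and the right-hand side of \eqref{weakorlicz} is interpreted as $+\infty$, so the inequality holds trivially. For $\alpha>t_0$, $\Phi$ is strictly increasing on $[t_0,\infty)$, so the implication in Step~1 still holds. Jensen's inequality on balls requires no finiteness of $|f|$'s averages beyond local integrability. This is guaranteed because $f\in L^\Phi$ implies $f\in L^1_{\loc}$, since $\Phi$ grows at least linearly at infinity.
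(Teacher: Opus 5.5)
Your proof is correct and follows the paper's argument: the pointwise inequality $\Phi(\M f)\le \M(\Phi(|f|))$ from Proposition~\ref{JensWMax} places the level set $\{\M f>\alpha\}$ inside $\{\M(\Phi(|f|))>\Phi(\alpha)\}$, and the weak-type $(1,1)$ bound \eqref{weak1} then gives \eqref{weakorlicz} with $C=C(n)$. Your version is slightly more careful than the paper's: you use only the inclusion of level sets, justified by convexity, which forces $\Phi(s)>\Phi(\alpha)$ for $s>\alpha$ whenever $\Phi(\alpha)>0$. You also treat separately the degenerate case where $\Phi$ vanishes on an initial interval, whereas the paper asserts equality of the level sets merely ``since $\Phi$ is increasing''.
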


\begin{proof}
Since $\Phi$ is increasing, using Proposition~\ref{JensWMax} and the weak-type $(1,1)$ inequality \eqref{weak1} to the
non-negative function $\Phi(|f|)$, we obtain
\allowdisplaybreaks
\begin{align*}
	|\{x \in \R^n : \M  f(x) > \alpha \}| &=|\{x \in \R^n : \Phi(\M  f(x)) > \Phi(\alpha) \}| \\
	&\leq |\{x \in \R^n : \M  \, \Phi(|f(x)|) > \Phi(\alpha) \}| \\
	&\leq \frac{C}{\Phi(\alpha)} \int_{\R^n} \Phi(|f(x)|) \, dx.
\end{align*}
which completes the proof.
\end{proof}

\begin{thm}[Strong-type inequality]\label{key1.2}
Let $\Phi\in\nabla_2$. Then, there exists a constant $C>0$ such that, for every
$f\in L^\Phi(\R^n)$, the Hardy--Littlewood maximal operator $\M$ satisfies the
strong-type $(\Phi,\Phi)$ estimate
\begin{equation}\label{strongORLnorm}
\|\M f\|_{L^\Phi(\R^n)}
\le
C\,\|f\|_{L^\Phi(\R^n)}.
\end{equation}

Moreover, there exists a constant $C'>0$ such that
\begin{equation}\label{strongorlicz}
\int_{\R^n}\Phi(\M f(x))\,dx
\le
\int_{\R^n}\Phi(C'|f(x)|)\,dx.
\end{equation}
\end{thm}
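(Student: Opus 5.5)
We prove the modular inequality \eqref{strongorlicz} first and then derive the norm inequality \eqref{strongORLnorm} from it. The plan is the classical layer-cake argument. We combine a refined weak-type estimate for $\M$, which only counts the large values of $f$, with the Hardy-type inequality \eqref{nabla2} of Lemma~\ref{lem-nabladelta}. Only the $\nabla_2$ part of that lemma is needed. Its proof uses only Lemma~\ref{cpow} and \eqref{lambda}. If we wish to avoid \eqref{lambda}, we can integrate by parts in the form $\int_0^t \Phi(s)s^{-r-1}\,ds\le C\,\Phi(\mathfrak a t)t^{-r}$, and the dilation $\mathfrak a$ is then absorbed into the constant $C'$.

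\emph{Refined weak-type bound.} For $\alpha>0$ we split $f=f\chi_{\{|f|>\alpha/2\}}+f\chi_{\{|f|\le\alpha/2\}}$. The second piece has maximal function at most $\alpha/2$, so \eqref{weak1} gives
$$
|\{\M f>\alpha\}|\le |\{\M(f\chi_{\{|f|>\alpha/2\}})>\alpha/2\}|\le \frac{2C(n)}{\alpha}\int_{\{|f|>\alpha/2\}}|f(x)|\,dx .
$$

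\emph{Layer cake.} We write $\int_{\R^n}\Phi(\M f)\,dx=\int_0^\infty |\{\M f>\alpha\}|\,d\Phi(\alpha)$, insert the bound above, and apply Fubini:
$$
\int_{\R^n}\Phi(\M f)\,dx\le 2C(n)\int_{\R^n}|f(x)|\int_0^{2|f(x)|}\frac{d\Phi(\alpha)}{\alpha}\,dx .
$$
The inner integral is where $\nabla_2$ enters, and this is the main obstacle. Using only convexity would give $\int_0^t d\Phi(\alpha)/\alpha$ comparable to $\Phi(t)/t$ only up to logarithmic losses, which fails in general. With \eqref{nabla2} for an exponent $r>1$ we instead write $d\Phi(\alpha)/\alpha=\alpha^{r-1}\,d\Phi(\alpha)/\alpha^r\le t^{r-1}\,d\Phi(\alpha)/\alpha^r$ on $(0,t)$. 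This yields
$$
\int_0^t\frac{d\Phi(\alpha)}{\alpha}\le C_r\frac{\Phi(t)}{t},
$$
which is essentially the mechanism of Corollary~\ref{cor1} pushed down to the exponent $1$. With $t=2|f(x)|$ we get
$$
\int_{\R^n}\Phi(\M f)\,dx\le C\int_{\R^n}\Phi(2|f(x)|)\,dx .
$$
By \eqref{CCC}, $C\,\Phi(2s)\le \Phi(2Cs)$ whenever $C\ge1$, so we may take $C'=2\max\{C,1\}$ (times $\mathfrak a$ if the dilated version of the Hardy inequality is used). This is exactly \eqref{strongorlicz}.

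\emph{Norm estimate.} We apply \eqref{strongorlicz} to $f/(C'\lambda)$ with $\lambda=\|f\|_{L^\Phi(\R^n)}$. This gives $\int\Phi\bigl(\M f/(C'\lambda)\bigr)\,dx\le\int\Phi(|f|/\lambda)\,dx\le1$, and therefore $\|\M f\|_{L^\Phi(\R^n)}\le C'\|f\|_{L^\Phi(\R^n)}$. If the infimum in \eqref{Lux} is not attained, we use $\lambda'>\lambda$ and let $\lambda'\to\lambda$. We also need $\M f$ to be well defined, that is $f\in L^1_{\loc}$; this follows from Proposition~\ref{JensInOrl} on balls.
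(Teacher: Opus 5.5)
Your proof is correct, and it takes a different and more complete route than the paper. The paper gives no argument: it only says the result is a standard consequence of the weak-type $(\Phi,\Phi)$ bound of Theorem~\ref{key1.1} together with $\nabla_2$, and cites the literature.

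Taken literally, that hint is not enough. Inserting $|\{\M f>\alpha\}|\le C\,\Phi(\alpha)^{-1}\int\Phi(|f|)$ into the layer-cake formula produces the divergent factor $\int_0^\infty d\Phi(\alpha)/\Phi(\alpha)=\infty$. Your refined weak-$(1,1)$ estimate avoids this. It charges only the set $\{|f|>\alpha/2\}$ and disposes of the rest by the trivial bound $\M(f\chi_{\{|f|\le\alpha/2\}})\le\alpha/2$. This is exactly what lets Fubini produce the Hardy-type integral $\int_0^{2|f|}d\Phi(\alpha)/\alpha$, and pushing \eqref{nabla2} down to exponent $1$ then closes the argument.

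Structurally, your proof is the one-sided analogue of the paper's proof of Theorem~\ref{thm-4}. The $L^\infty$ bound for $\M$ replaces the weak-$L^p$ estimate on the small part $f_t$, which is why only the $\nabla_2$ half of Lemma~\ref{lem-nabladelta} is needed.

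You also rightly notice that the paper's proof of \eqref{nabla2} uses \eqref{lambda}, which is a $\Delta_2$ property. Your dilated version $\int_0^t d\Phi(s)/s^r\le C\,\Phi(\mathfrak a t)/t^r$ follows from Lemma~\ref{cpow} alone. It therefore keeps the argument within the stated hypothesis $\Phi\in\nabla_2$, with the dilation $\mathfrak a$ harmlessly absorbed into $C'$ via \eqref{CCC}.

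What your route buys is a self-contained proof built only from tools already established in the paper: \eqref{weak1}, Lemma~\ref{cpow} and \eqref{CCC}. It also gives an explicit $C'$ in terms of $C(n)$, $r$, $R$ and $\mathfrak a$. The paper's citation buys only brevity.
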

\begin{proof}
The result is a standard consequence of the weak-type estimate established  in
Theorem~\ref{key1.1} together with the assumption $\Phi\in\nabla_2$; see, for instance,
\cite[Ch.~IX]{RR2}.
\end{proof}

%---------------------------------------------------------
\section{Singular Integrals in Orlicz Spaces}\label{sec4}

In this section, we study the boundedness properties of singular integral operators with variable Calder\'on--Zygmund kernels acting on Orlicz spaces.

\begin{defin}\label{dVCZ}
A function
$$
k(x;\xi):\R^n\times(\R^n\setminus\{0\})\to\R
$$
is called a \emph{variable Calder\'on--Zygmund (VCZ) kernel} if it satisfies the following conditions:

\begin{enumerate}

\item For every fixed $x\in\R^n$, the function $\xi\mapsto k(x;\xi)$ is a classical Calder\'on--Zygmund kernel, that is,
\begin{enumerate}
\item
$k(x;\cdot)\in C^\infty(\R^n\setminus\{0\});$
\item
$k(x;\cdot)$ is homogeneous of degree $-n$, namely,
$$
k(x;\mu\xi)=\mu^{-n}k(x;\xi),
\qquad \forall\,\mu>0,\ \xi\in\R^n\setminus\{0\};
$$
\item
$k(x;\cdot)$ satisfies the cancellation condition
$$
\int_{\mathbb S^{n-1}}k(x;\xi)\,d\sigma_\xi=0,
\qquad \int_{\mathbb S^{n-1}}|k(x;\xi)|\,d\sigma_\xi<\infty.
$$
\end{enumerate}

\item
For every multi-index $\beta$, there exists a constant $C(\beta)>0$ such that
$$
\sup_{x\in \R^n}\sup_{\xi\in\mathbb S^{n-1}}
|D^\beta_\xi k(x;\xi)| \leq C(\beta).
$$
\end{enumerate}
\end{defin}

\begin{lem}[Hörmander condition]
Let $\B=\B_r(x_0)$ and $2\B=\B_{2r}(x_0)$. Then, there exists a constant
$C=C(n)>0$ such that
$$
|k(x;x-y)-k(x;x_0-y)|
\leq  C\,\frac{|x-x_0|}{|x_0-y|^{n+1}}
$$
for every  $x\in\B$ and all $y\notin2\B$.
\end{lem}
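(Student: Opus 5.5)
The plan is to apply the mean value theorem to the smooth function $\xi\mapsto k(x;\xi)$ along the segment joining $\xi_0=x_0-y$ and $\xi_1=x-y$, and then to control the gradient by combining homogeneity with the uniform bound on the unit sphere. The key point is that this segment stays uniformly far from the origin. Indeed, for $x\in\B$ and $y\notin 2\B$ we have $|x-x_0|<r$ and $|x_0-y|\geq 2r$. Hence every point of the segment, $\xi_t=x_0-y+t(x-x_0)$ with $t\in[0,1]$, satisfies
$$
|\xi_t|\geq |x_0-y|-|x-x_0|\geq |x_0-y|-\tfrac12|x_0-y|=\tfrac12|x_0-y|>0 .
$$
So the segment avoids $0$, and $k(x;\cdot)$ is $C^\infty$ along it by condition (1a) of Definition~\ref{dVCZ}.

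Next we bound the gradient. Differentiating the homogeneity identity $k(x;\mu\xi)=\mu^{-n}k(x;\xi)$ in $\xi$ shows that $\nabla_\xi k(x;\cdot)$ is homogeneous of degree $-n-1$. Writing $\xi=|\xi|\,(\xi/|\xi|)$, this gives
$$
|\nabla_\xi k(x;\xi)|=|\xi|^{-n-1}\bigl|\nabla_\xi k(x;\xi/|\xi|)\bigr|\leq \Bigl(\sum_{|\beta|=1}C(\beta)\Bigr)|\xi|^{-n-1}.
$$
The constant here comes from condition (2), applied with $|\beta|=1$, and it is uniform in $x$.

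Finally we combine the two steps. By the mean value theorem,
$$
|k(x;x-y)-k(x;x_0-y)|\leq |x-x_0|\sup_{t\in[0,1]}|\nabla_\xi k(x;\xi_t)|\leq C\,|x-x_0|\,\bigl(\tfrac12|x_0-y|\bigr)^{-n-1},
$$
which is the claim with constant $2^{n+1}\sum_{|\beta|=1}C(\beta)$.

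There is no real obstacle in this argument. The only points to check are that the segment avoids the origin, which is exactly where the hypothesis $y\notin 2\B$ enters, and that the derivative bound is uniform in $x$. Strictly speaking, the constant depends on $n$ and on the kernel bounds $C(\beta)$ with $|\beta|=1$, not on $n$ alone.
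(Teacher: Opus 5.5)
Your proof is correct and is essentially the argument the paper has in mind. The paper gives no details: it only says the lemma follows from the smoothness and homogeneity assumptions in Definition~\ref{dVCZ} and cites \cite[Lemma~2.2]{ChFrL1}. Your mean value theorem argument, along a segment kept at distance at least $\tfrac12|x_0-y|$ from the origin and combined with the $(-n-1)$-homogeneity of $\nabla_\xi k$ and condition (2), supplies exactly those details. Your closing remark is also right: the constant depends on $n$ and on the kernel bounds $C(\beta)$ with $|\beta|=1$, not on $n$ alone as the statement suggests.
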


The proof is standard and follows from the smoothness and homogeneity assumptions in 
Definition~\ref{dVCZ};  see, for instance,
\cite[Lemma~2.2]{ChFrL1}).

Given a variable Calder\'on--Zygmund kernel $k(x;\xi)$, we define the singular
integral operator
$$
\K f(x) :=\mathrm{p.v.}\int_{\R^n} k(x;x-y)\,f(y)\,dy,
$$
and the commutator
$$
\C[a,f](x) := \mathrm{p.v.}\int_{\R^n}
k(x;x-y)\,[a(y)-a(x)]\,f(y)\,dy,
$$
where $a\in L^\infty(\R^n)$ and $\mathrm{p.v.}$ denotes the Cauchy principal value.

The following theorem establishes the boundedness of the operators
$\K$ and $\C[a,\cdot]$ on $L^p(\R^n)$ and is proved in
\cite{ChFrL1}; see also \cite{ChFrL2,DP1,DPR}.

\begin{thm}\label{thm-3}
Let $1<p<\infty$, let $f\in L^p(\R^n)$, and let $a\in BMO.$  Then, there exists a constant $C=C(n,p)>0$ such that
$$
\|\K f\|_{L^p(\R^n)}
\leq  C\,\|f\|_{L^p(\R^n)},\qquad 
\|\C[a,f]\|_{L^p(\R^n)}
\leq  C\,\|a\|_\ast\,\|f\|_{L^p(\R^n)},
$$
where $\|a\|_\ast$ denotes the $BMO$ norm of $a$.

In addition, if $a\in VMO\cap L^\infty(\R^n)$ and $\eta_a $ denotes the $VMO$-modulus of $a$, then for any $\varepsilon>0$ there exist a positive constants $\rho_0=\rho_0(\varepsilon,\eta_a)$ and $C(n,p)$ such that 
\begin{equation}\label{local}
\|\C[a,f]\|_{L^p(\B_r)}\leq C\,\varepsilon \|f\|_{L^p(\B_r)}, \quad \forall \, f\in L^p(\B_r), 
\end{equation}
for every  ball $\B_r$ with radius $r\in (0,\rho_0)$.
\end{thm}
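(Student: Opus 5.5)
The plan follows Chiarenza--Frasca--Longo \cite{ChFrL1}. For the kernel, expand $k(x;\cdot)$ in spherical harmonics, $k(x;\xi)=\sum_{m\ge1}\sum_{s=1}^{g_m} b_{ms}(x)\,Y_{ms}(\xi/|\xi|)|\xi|^{-n}$. The term $m=0$ is absent by the cancellation condition. This reduces $\K$ to $\sum_{m,s} b_{ms}(x)\,\K_{ms}f(x)$, where each $\K_{ms}$ is a constant-coefficient Calder\'on--Zygmund operator with kernel $Y_{ms}(\xi/|\xi|)|\xi|^{-n}$. Two standard facts control the sum. First, $g_m\le Cm^{n-2}$. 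Second, by condition (2) of Definition~\ref{dVCZ}, integration by parts on the sphere gives $\sup_x|b_{ms}(x)|\le C(\beta)m^{-2\beta}$ for any $\beta$. Hence, using the Fourier multiplier bound, $\|\K_{ms}f\|_{L^p}\le C m^{n/2}\|f\|_{L^p}$. Choosing $2\beta>n/2+n-1$, the series converges absolutely in operator norm, and the $L^p$ bound for $\K$ follows from the classical constant-kernel result.

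For the commutator with $a\in BMO$, the same expansion gives $\C[a,f]=\sum_{m,s}b_{ms}(x)\,[a,\K_{ms}]f(x)$. Coifman--Rochberg--Weiss gives $\|[a,\K_{ms}]f\|_{L^p}\le C\,\|a\|_\ast\, m^{N}\|f\|_{L^p}$ for some fixed power $N$, which can be tracked through the sharp maximal function proof: $M^\#([a,T]f)\le C\|a\|_\ast\big(M_q(Tf)+M_q f\big)$ for $1<q<p$, with constants polynomial in the kernel's $C^1$ norm on the sphere. This uses the John--Nirenberg Lemma~\ref{JN-type}. The decay of the $b_{ms}$ again sums the series. \emph{The main obstacle} is this uniform polynomial dependence on $m$. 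To handle it, I would redo the Fefferman--Stein argument once for a generic constant kernel and record that the constant depends only on $\|Y\|_{C^1(\SF^{n-1})}$ and the multiplier norm, both polynomial in $m$.

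For the local estimate \eqref{local}, fix $\varepsilon>0$ and a ball $\B_r$, and extend $a|_{\B_r}$ to $\tilde a$ on $\R^n$ so that $\|\tilde a\|_\ast\le C\,\eta_a(r)$. One way to do this is reflection across $\partial\B_r$ followed by periodization; alternatively, cite the extension lemma in \cite{ChFrL1}. For $f$ supported in $\B_r$ and $x\in\B_r$, one has $\C[a,f](x)=\C[\tilde a,f](x)$. The global bound then gives $\|\C[a,f]\|_{L^p(\B_r)}\le C\|\tilde a\|_\ast\|f\|_{L^p}\le C\eta_a(r)\|f\|_{L^p(\B_r)}$. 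Since $\eta_a(r)\to0$, we choose $\rho_0$ with $\eta_a(\rho_0)<\varepsilon$, which proves \eqref{local}.
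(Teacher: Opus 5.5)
Your proposal is correct and is essentially the Chiarenza--Frasca--Longo argument. It expands $k(x;\cdot)$ in spherical harmonics, so that the rapid decay of $b_{ms}$ beats the polynomial-in-$m$ growth of $g_m$ and of the constant-kernel Calder\'on--Zygmund and Coifman--Rochberg--Weiss bounds; the local estimate then follows from the global one via a $BMO$ extension $\tilde a$ of $a|_{\B_r}$ with $\|\tilde a\|_\ast$ controlled by $\eta_a$. The paper gives no proof of its own for this theorem and simply defers to \cite{ChFrL1}, which proceeds along these lines.
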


\begin{rem}
The assumption $a\in BMO$ is sharp for the boundedness of the commutator $\C[a,\cdot]$. 
\end{rem}

As a direct consequence of the strong $L^p$ boundedness, it follows that  the corresponding weak-type estimates; see, for instance, \cite{Gr}.

\begin{lem}\label{lem-1}
Let $1<p<\infty$ and let $f\in L^p(\R^n)$. Then, there exists a constant  $\kappa_p=\kappa(C,p)>0$, where $C$ is the constant appearing in Theorem~\ref{thm-3}, such that 
\begin{equation}\label{eq-Kf-weakp}
\|\K f\|_{L^{p,\infty}(\R^n)}
\leq  \kappa_p\,\|f\|_{L^p(\R^n)},
\end{equation}
equivalently, for every $t>0$,
\begin{equation}\label{eq-Kf-weakp2}
\left| \left\{ x\in\R^n: |\K f(x)|>t
\right\}\right|
\leq  \frac{\kappa_p^p}{t^p} \int_{\R^n}|f(x)|^p\,dx.
\end{equation}

Moreover, if $a\in BMO,$  then
\begin{equation}\label{eq-Caf-weakp}
\|\C[a,f]\|_{L^{p,\infty}(\R^n)}
\le
\kappa_p\,\|a\|_\ast\,\|f\|_{L^p(\R^n)},
\end{equation}
that is, for every $t>0$,
\begin{equation}\label{eq-Caf-weakp2}
\left|
\left\{
x\in\R^n:
|\C[a,f](x)|>t
\right\}
\right|
\le
\frac{\kappa_p^p\|a\|_\ast^p}{t^p}
\int_{\R^n}|f(x)|^p\,dx.
\end{equation}
\end{lem}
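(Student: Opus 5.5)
The lemma follows from Theorem~\ref{thm-3} through the Chebyshev (Markov) inequality, which shows that the strong $L^p$ norm controls the weak $L^p$ quasi-norm.

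The key step is the distribution function bound. Fix $t>0$ and set $E_t=\{x\in\R^n:|\K f(x)|>t\}$. On $E_t$ we have $1<|\K f|^p/t^p$, so
$$
|E_t|\le \frac1{t^p}\int_{E_t}|\K f(x)|^p\,dx\le \frac1{t^p}\|\K f\|_{L^p(\R^n)}^p\le \frac{C^p}{t^p}\int_{\R^n}|f(x)|^p\,dx,
$$
where the last inequality is the first estimate of Theorem~\ref{thm-3}. This is \eqref{eq-Kf-weakp2} with $\kappa_p=C$. Here $C=C(n,p)$, so in the notation of the lemma $\kappa_p=\kappa(C,p)$. If the weak norm is normalized differently one may take a larger multiple, which still depends only on $C$ and $p$.

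To pass to \eqref{eq-Kf-weakp}, we multiply by $t^p$, take the $p$-th root, and take the supremum over $t>0$. By the definition of the quasi-norm on $L^{p,\infty}(\R^n)$ recalled in the notation list, this gives $\|\K f\|_{L^{p,\infty}}\le\kappa_p\|f\|_{L^p}$. Conversely, \eqref{eq-Kf-weakp} gives $t\,|E_t|^{1/p}\le\kappa_p\|f\|_{L^p}$ for each $t$, which is \eqref{eq-Kf-weakp2} after raising to the $p$-th power. This justifies the word ``equivalently''.

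For the commutator we repeat the argument with $F_t=\{x:|\C[a,f](x)|>t\}$, using the commutator estimate $\|\C[a,f]\|_{L^p}\le C\|a\|_\ast\|f\|_{L^p}$ from Theorem~\ref{thm-3}. This gives $|F_t|\le t^{-p}C^p\|a\|_\ast^p\|f\|_{L^p}^p$, which is \eqref{eq-Caf-weakp2}. Taking the supremum over $t$ as before yields \eqref{eq-Caf-weakp}, with the same constant $\kappa_p$.

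No real obstacle arises. One point needs a remark: $\K f$ and $\C[a,f]$ are defined as principal values, so they must be measurable and finite almost everywhere for $E_t$ and $F_t$ to make sense. This is already part of the content of Theorem~\ref{thm-3}, since the principal values exist a.e. for $f\in L^p$ and define $L^p$ functions, and we take it as given.
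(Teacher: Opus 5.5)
Your proposal is correct and follows the paper's route. The paper applies the embedding $L^p(\R^n)\hookrightarrow L^{p,\infty}(\R^n)$, $\|g\|_{L^{p,\infty}(\R^n)}\le\|g\|_{L^p(\R^n)}$, with $g=\K f$ and $g=\C[a,f]$, and then uses Theorem~\ref{thm-3}; your Chebyshev argument is exactly the proof of that embedding and gives the same constant $\kappa_p=C$.
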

\begin{proof}
It is well known from the classical theory of Lorentz spaces (see, for instance,
\cite{Gr}) that
$$
L^p(\R^n) \hookrightarrow L^{p,\infty}(\R^n),
\qquad
\|g\|_{L^{p,\infty}(\R^n)} \leq \|g\|_{L^p(\R^n)},
$$
for every $g\in L^p(\R^n)$.
Applying this embedding with  $g=\K f$ and $g=\C[a,f]$, together with the strong
$L^p$ estimates of  Theorem~\ref{thm-3}, we obtain
\eqref{eq-Kf-weakp} and \eqref{eq-Caf-weakp}. The corresponding
distributional inequalities \eqref{eq-Kf-weakp2} and \eqref{eq-Caf-weakp2} follow directly from the definition of the weak $L^p$ norm.
\end{proof}

The $L^p$ boundedness of the singular integral operators extends to Orlicz spaces  under the assumptions $\Phi\in\Delta_2\cap\nabla_2$.

\begin{thm}\label{thm-4}
Let $\Phi\in\Delta_2\cap\nabla_2$, let $f\in L^\Phi(\R^n)$, and let $a\in BMO.$ 
Then, the operators $\K$ and $\C[a,\cdot]$ are bounded on $L^\Phi(\R^n)$. 
More precisely, there exists a constant $C>0$ such that
\begin{align}
\label{eq-Kfphi2}
\|\K f\|_{L^\Phi(\R^n)}
&\le
C\,\|f\|_{L^\Phi(\R^n)},\\[5pt]
\label{eq-CAFphi2}
\|\C[a,f]\|_{L^\Phi(\R^n)}
&\le
C\,\|a\|_\ast\,\|f\|_{L^\Phi(\R^n)}.
\end{align}
\end{thm}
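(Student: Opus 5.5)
We prove the boundedness of $\K$ and $\C[a,\cdot]$ on $L^\Phi(\R^n)$ by interpolation between two Lebesgue exponents, using the weak-type bounds of Lemma~\ref{lem-1} together with the Hardy-type characterizations of Lemma~\ref{lem-nabladelta}. This is a Marcinkiewicz-type argument adapted to Young functions.

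\textbf{Setup and choice of exponents.} Let $T$ denote either $\K$ or $\C[a,\cdot]$, and let $M_T$ be $1$ or $\|a\|_\ast$ respectively. Fix exponents $1<r<p<\infty$ as in Lemma~\ref{lem-nabladelta}; Corollary~\ref{cor1} allows us to adjust them if needed. By Lemma~\ref{lem-1}, $T$ is of weak type $(r,r)$ and of weak type $(p,p)$, with constant $\kappa M_T$.

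\textbf{Splitting at each height.} For $f\in L^\Phi$ (first take $f$ simple with compact support, then argue by density) and $t>0$, decompose $f=f_1^t+f_2^t$, where
\[
f_1^t=f\chi_{\{|f|>t/2\}}, \qquad f_2^t=f\chi_{\{|f|\le t/2\}}.
\]
The piece $f_1^t$ lies in $L^r$ and $f_2^t$ lies in $L^p$; this follows from the growth controls in Lemmas~\ref{pow} and~\ref{cpow}. Linearity of $T$ and the two weak-type bounds give
\[
|\{|Tf|>t\}|\le \frac{C M_T^r}{t^r}\int_{\{|f|>t/2\}}|f|^r\,dx+\frac{C M_T^p}{t^p}\int_{\{|f|\le t/2\}}|f|^p\,dx .
\]

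\textbf{Integrating against $d\Phi$.} We use the layer-cake formula
\[
\int\Phi(|Tf|)\,dx=\int_0^\infty|\{|Tf|>t\}|\,d\Phi(t),
\]
and apply Fubini to each term. The first term becomes
\[
C M_T^r\int|f|^r\int_0^{2|f|}\frac{d\Phi(t)}{t^r}\,dx\le C\int\Phi(2|f|)\,dx,
\]
where the inequality is \eqref{nabla2}. The second term becomes
\[
C M_T^p\int|f|^p\int_{2|f|}^\infty\frac{d\Phi(t)}{t^p}\,dx\le C\int\Phi(2|f|)\,dx,
\]
where the inequality is \eqref{delta2}. Using \eqref{lambda} for the factor $2$, we obtain the modular inequality
\[
\int\Phi(|Tf|)\le C(M_T)\int\Phi(|f|).
\]

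\textbf{Passing to the Luxemburg norm.} Apply the modular inequality to $f/(\lambda\|f\|_{L^\Phi})$. We need the constant to scale correctly, and this is the main obstacle: the modular inequality carries the constant $M_T^r$ or $M_T^p$ \emph{outside} $\Phi$, rather than a factor $M_T$ inside.

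To handle it, normalize $T$ by $M_T$ first. For the commutator, assume $\|a\|_\ast=1$ by homogeneity, since $\C[a,f]$ is linear in $a$. Then the modular inequality reads $\int\Phi(|Tf|)\le A\int\Phi(|f|)$ with $A$ independent of $a$. Choose $\lambda\ge1$ so that $A\,\Phi(s/\lambda)\le\Phi(s)$ for all $s$. Such a $\lambda$ exists because $\Phi(s)\le\frac{1}{C}\Phi(Cs)$ by \eqref{CCC}, which gives
\[
\Phi(s/\lambda)\le \frac{1}{\lambda}\Phi(s),
\]
so $\lambda=A$ works. Applying the modular inequality to $f/(A\|f\|_{L^\Phi})$ then gives
\[
\int\Phi\Bigl(\frac{|Tf|}{A\|f\|_{L^\Phi}}\Bigr)\le A\int\Phi\Bigl(\frac{|f|}{A\|f\|_{L^\Phi}}\Bigr)\le\int\Phi\Bigl(\frac{|f|}{\|f\|_{L^\Phi}}\Bigr)\le1 .
\]
Hence $\|Tf\|_{L^\Phi}\le A\|f\|_{L^\Phi}$, which yields \eqref{eq-Kfphi2} and \eqref{eq-CAFphi2}.

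The remaining technical point is justifying that $T$ is defined on $L^\Phi$ and the density step. For this, note that $L^\Phi\subset L^r+L^p$ by the splitting above, so $T$ is well defined on $L^\Phi$, and conclude by Fatou's lemma.
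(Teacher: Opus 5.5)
Your proposal is correct and is essentially the paper's proof. Both split $f$ at a height proportional to $t$, apply the weak-type $(r,r)$ bound to the large part and the weak-type $(p,p)$ bound to the small part, integrate against $d\Phi(t)$ via the layer-cake formula, and close with Fubini and the Hardy-type inequalities of Lemma~\ref{lem-nabladelta}. The only difference is how the constant is handled: the paper truncates at $t/C$ (resp.\ $t/(C\|a\|_\ast)$) with $C$ tuned to $\kappa_r,\kappa_p,C_r,C_p$, so the modular inequality comes out directly as $\int\Phi(|Tf|)\le\int\Phi(C|f|)$ and the Luxemburg bound is immediate, whereas you truncate at $t/2$ and convert $\int\Phi(|Tf|)\le A\int\Phi(|f|)$ into a norm bound via $A\,\Phi(s/A)\le\Phi(s)$ and homogeneity in $a$, which is equally valid.
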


\begin{proof}
Let $f\in L^\Phi(\R^n)$. We decompose $f$ as
$f=f_t+f^t,$ 
where
\begin{equation}\label{Dec}
f_t(x)=
\begin{cases}
f(x), & \text{if } |f(x)|\leq \dfrac{t}{C},\\[5pt]
0, & \text{if } |f(x)|>\dfrac{t}{C},
\end{cases}
\qquad
f^t(x)=
\begin{cases}
f(x), & \text{if } |f(x)|>\dfrac{t}{C},\\[5pt]
0, & \text{if } |f(x)|\leq\dfrac{t}{C}.
\end{cases}
\end{equation}
Here
$$
C:=\max\left\{
2^{1+\frac1r}\,\kappa_r\, C_r^{\frac1r}, \
2^{1+\frac1p}\,\kappa_p\, C_p^{\frac1p}
\right\},
$$
where $\kappa_r$, $C_r$, $\kappa_p$, and $C_p$ are the constants appearing in
Lemmas~\ref{lem-nabladelta} and~\ref{lem-1}, respectively. 
By the definition of $C$, we have
\begin{equation*}
\frac1{C^r}
\leq 
\frac1{2^{r+1}\kappa_r^rC_r},
\qquad
\frac1{C^p}
\leq 
\frac1{2^{p+1}\kappa_p^pC_p}.
\end{equation*}

Since 
$
|\K f(x)| \leq |\K f^t(x)| + |\K f_t(x)| $,  therefore 
$$ 
\{x \in \R^n \ : \ |\K f(x)|>t\} \subseteq \left\{x \in \R^n \ : \ |\K f^t(x)|> \tfrac{t}{2}\right\} \cup \left\{x \in \R^n \ : \ |\K f_t(x)|> \tfrac{t}{2}\right\}.
$$
We obtain the following estimate for the layer-cake representation
\begin{align*}
\int_{\R^n} \Phi(|\K  f(x)|) \, dx  &=\int_0^\infty |\{ x \in \R^n \ : \ |\K f(x)|>t\}| \, d\Phi(t) \\ 
&\leq \int_0^\infty \left|\left\{ x \in \R^n \ : \ |\K f^t(x)|>\tfrac{t}{2}\right\}\right| \, d\Phi(t) + \int_0^\infty \left|\left\{ x \in \R^n \ : \ |\K f_t(x)|>\tfrac{t}{2}\right\}\right| \, d\Phi(t)\\
& =: I_1 + I_2.
\end{align*}

Applying  the weak-type $(L^r,L^{r,\infty})$ estimate \eqref{eq-Kf-weakp2}  to $f^t$,   together with Fubini's theorem and Lemma~\ref{lem-nabladelta}, we estimate the first integral 
\begin{align*}
I_1 &
%\leq
%\int_0^\infty
%\left|\left\{x\in\R^n:\,|\K f^t(x)|>
%\tfrac{t}{2}\right\} \right|\,d\Phi(t) \\
%&
\leq  2^r\kappa_r^r
\int_0^\infty \frac1{t^r}
\int_{\{|f(x)|>\tfrac{t}{C}\}}
|f(x)|^r\,dx\, d\Phi(t) \\
&= 2^r\kappa_r^r \int_{\R^n}
|f(x)|^r \left( \int_0^{C|f(x)|}
\frac{d\Phi(t)}{t^r}\right)\, dx \\
&\leq  \frac12 \int_{\R^n} \Phi(C|f(x)|)\,dx.
\end{align*}
 
Similarly, applying  the weak-type $(L^p,L^{p,\infty})$ estimate 
\eqref{eq-Kf-weakp2} to $f_t$, together with Fubini's theorem and
Lemma~\ref{lem-nabladelta}, we obtain the estimate for the second integral 
\begin{align*}
I_2
&\leq 2^p\kappa_p^p
\int_0^\infty
\frac1{t^p} \int_{\{|f(x)|\leq \tfrac{t}{C}\}}
|f(x)|^p\,dx\,d\Phi(t) \\
&= 2^p\kappa_p^p
\int_{\R^n}
|f(x)|^p \left(
\int_{C|f(x)|}^\infty
\frac{d\Phi(t)}{t^p}
\right)\,  dx \\
&\leq  \frac12 \int_{\R^n} \Phi(C|f(x)|)\,dx.
\end{align*}

Combining the above estimates, yields 
$$
\int_{\R^n} \Phi(|\K f(x)|)\,dx
\leq  \int_{\R^n}\Phi(C|f(x)|)\,dx,
$$
which by the definition of the Luxemburg norm,   implies
$$
\|\K f\|_{L^\Phi(\R^n)}
\leq  C\,\|f\|_{L^\Phi(\R^n)}.
$$

The proof for the commutator $\C[a,\cdot]$ is analogous, but in the decomposition
\eqref{Dec}, we replace the term $\frac{t}{C}$ by
$\frac{t}{C\|a\|_\ast}$ and use the weak-type 
bounds for $\C[a,f]$. Proceeding as above, we write
\begin{align*}
\int_{\R^n}\Phi( |\C[a,f](x)|)\, dx &= \int_0^\infty \left|\left\{ x \in \R^n \ : \ |\C[a,f](x)|>t \right\}\right| \, d\Phi(t) \\ &\leq 
 \int_0^\infty \left|\left\{ x \in \R^n \ : \ |\C[a,f^t](x)|>\tfrac{t}{2}\right\}\right| \, d\Phi(t) \\
 &\quad + \int_0^\infty \left|\left\{ x \in \R^n \ : \ |\C[a,f_t](x)|>\tfrac{t}{2}\right\}\right| \, d\Phi(t) \\ &=:I_3+I_4.
 \end{align*}

Applying the weak-type estimate \eqref{eq-Caf-weakp2} for 
the commutator
$\C[a,\cdot]$, and the same integral technique previously used, we obtain
\begin{align*}
I_3
&\leq 2^r\kappa_r^r\|a\|_\ast^r
\int_{\R^n} |f(x)|^r
\left( \int_0^{C\|a\|_\ast|f(x)|} \frac{d\Phi(t)}{t^r}\right)\, 
dx \\
&\leq 
\frac12 \int_{\R^n} \Phi(C\|a\|_\ast|f(x)|)\,dx,
\end{align*}
and
\begin{align*}
I_4
&\leq 
2^p\kappa_p^p\|a\|_\ast^p
\int_{\R^n} |f(x)|^p
\left( \int_{C\|a\|_\ast|f(x)|}^\infty
\frac{d\Phi(t)}{t^p} \right)\,  dx \\
&\leq  \frac12 \int_{\R^n} \Phi(C\|a\|_\ast|f(x)|)\,dx.
\end{align*}

Consequently,
$$
\int_{\R^n}\Phi(|\C[a,f](x)|)\,dx
\leq  \int_{\R^n}\Phi(C\|a\|_\ast|f(x)|)\,dx,
$$
which, by the definition of Luxemburg norm, implies
$$
\|\C[a,f]\|_{L^\Phi(\R^n)}
\leq  C\,\|a\|_\ast\,\|f\|_{L^\Phi(\R^n)}.
$$
This completes the proof.
\end{proof}

%-----------------------------------------------------------------------------------
\section{Higher-Order Elliptic Operators with {\it VMO} Coefficients}\label{sec5}

Let $\Omega\subset\R^n$, with $n\geq 3$, be an open domain. The restriction  $n\geq 3$ is imposed to avoid the logarithmic behaviour of the fundamental solution. 

In this section, we study the
following linear elliptic system of order $2b$, where $b\ge1$:
\begin{equation}\label{syst}
\mathcal{L}(x,D)\mathbf{u}
:=
\sum_{|\alpha|=2b}
\mathbf{A}_\alpha(x)\,D^\alpha\mathbf{u}(x)
=
\mathbf{f}(x),
\qquad
x\in\Omega.
\end{equation}

Here, $\bm{u}=(u_1,\ldots,u_m)^\top:\Omega\to\R^m$ is the unknown vector-valued function, with $m\geq 1$, while \mbox{$\bm{f}=(f_1,\ldots,f_m)^\top:\Omega\to\R^m$} is a prescribed
vector-valued function. Moreover,
$$
\bA_\alpha(x)=\{a_{jk}^{(\alpha)}(x)\}_{j,k=1}^m
$$
denotes an $m\times m$ matrix of measurable functions on $\Omega$, where
$$
a_{jk}^{(\alpha)}=
a_{jk}^{\alpha_1\cdots\alpha_n}.
$$

We introduce the component-wise differential operators
\begin{equation}\label{eq-hom}
l_{jk}(x,D) =
\sum_{|\alpha|=2b}
a_{jk}^{(\alpha)}(x)\,D^\alpha,
\qquad
j,k=1,\ldots,m.
\end{equation}
With this notation, the system \eqref{syst} can be rewritten in component form as
\begin{equation}\label{syst1}
\sum_{k=1}^m
l_{jk}(x,D)\,u_k(x)
=
f_j(x),
\qquad
j=1,\ldots,m.
\end{equation}
We assume that the coefficients
$a_{jk}^{(\alpha)}\in VMO\cap L^\infty(\Omega)$, and denote by
$$
\eta_{\bA}(R)
:=
\sum_{j,k=1}^m
\sum_{|\alpha|=2b}
\eta_{a_{jk}^{(\alpha)}}(R), %\quad \lim_{R\to0}\eta_{\bA (R)=0.
$$
their joint $VMO$-modulus; it is clear that 
\[\quad \lim_{R\to0}\eta_{\bA}(R)=0.
\]
Moreover, we define the uniform bound
$$
\|\bA\|_{\infty,\Omega}
:=
\max_{j,k=1,\ldots,m}
\max_{|\alpha|=2b}
\|a_{jk}^{(\alpha)}\|_{\infty,\Omega}.
$$

We say that $\bm{u}\in W^{2b}_\Phi(\Omega;\R^m)$ is a \emph{local strong solution} of \eqref{syst} if it satisfies \eqref{syst1} almost everywhere in every subdomain $\Omega'\Subset\Omega$. Recall that the Orlicz--Sobolev norm is defined by
\begin{equation}\label{SobOrl}
\|\bm{u}\|_{W^{2b}_\Phi(\Omega)}
:= \sum_{|\alpha|\le2b} \|D^\alpha\bm{u}\|_{L^\Phi(\Omega)}.
\end{equation}
  
We now introduce  the principal symbol of the operator: for each $x\in\Omega$ and $\zeta\in\R^n$, let
$$
l_{jk}(x,\zeta) := \sum_{|\alpha|=2b} a_{jk}^{(\alpha)}(x)\,\zeta^\alpha,
$$
where $\zeta^\alpha=\zeta_1^{\alpha_1}\cdots\zeta_n^{\alpha_n}$.
We assume that the system \eqref{syst} is
\emph{uniformly elliptic} in the sense of 
Agmon--Douglis--Nirenberg
(cf. \cite{ChFF,DgN,PSf2,Sf1}), namely, there exists a constant
$\delta>0$ such that
\begin{equation}\label{hom+ellip}
\det\{l_{jk}(x,\zeta)\}_{j,k=1}^m
\geq 
\delta\,|\zeta|^{2bm}
\qquad
\text{for a.e. } x\in\Omega
\text{ and every  } \zeta\neq 0.
\end{equation}

We are now in a position to state the main regularity result of this section.

\begin{thm}
Suppose that the uniform ellipticity condition \eqref{hom+ellip} holds and
$a_{jk}^{(\alpha)}\in VMO\cap L^\infty(\Omega)$ for every $|\alpha|=2b$ and $j,k=1,\ldots,m$.

Assume further that
$$
\bm{f}\in L^\Phi(\Omega;\R^m),
\qquad
\Phi\in\Delta_2\cap\nabla_2,
$$
and that
$\bm{u}\in W^{2b}_\Phi(\Omega;\R^m)$ is a strong solution of
\eqref{syst}. Then, for every pair of subdomains
$\Omega'\Subset\Omega''\Subset\Omega$, there exists a constant
$C>0$, depending only on $n$, $m$, $b$, $\delta$, $\|\bA\|_{\infty,\Omega}$, the $VMO$  modulus $\eta_{\bA}$, and $\dist(\Omega',\partial \Omega'')$ such that
\begin{equation}\label{estim1}
\|\bm{u}\|_{W^{2b}_\Phi(\Omega')}
\leq 
C\left(
\|\bm{f}\|_{L^\Phi(\Omega'')} + \|\bm{u}\|_{L^\Phi(\Omega'')} \right).
\end{equation}
\end{thm}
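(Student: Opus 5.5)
We follow the classical Chiarenza--Frasca--Longo scheme, carried out in $L^\Phi$ by means of Theorem~\ref{thm-4}. The plan has three parts: a representation formula, a small-ball estimate, and a covering and interpolation step.

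\textbf{Step 1 (representation formula).} Fix $x_0\in\Omega''$ and a ball $\B_r=\B_r(x_0)\Subset\Omega''$. First take $\mathbf v\in C_0^\infty(\B_r;\R^m)$. Freeze the coefficients at a point $x\in\B_r$. Let $\Gamma(x;\xi)$ be the fundamental matrix of the constant-coefficient operator $\mathcal L(x,D_\xi)$; under \eqref{hom+ellip} and $n\ge3$ it is homogeneous of degree $2b-n$ in $\xi$. Differentiating twice $b$ times in the usual way gives, for $|\gamma|=2b$,
$$
D^\gamma\mathbf v(x)=\mathrm{p.v.}\int_{\B_r} D^\gamma_\xi\Gamma(x;x-y)\Big[\mathcal L(x,D)\mathbf v(y)-\mathcal L(y,D)\mathbf v(y)+\mathcal L(y,D)\mathbf v(y)\Big]dy+\int_{\SF^{n-1}}D^{\beta}_\xi\Gamma(x;\xi)\nu_j\,d\sigma_\xi\;\mathcal L(x,D)\mathbf v(x).
$$
Here $\gamma=\beta+e_j$, and the surface term carries a bounded coefficient matrix. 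The kernels $D^\gamma_\xi\Gamma(x;\xi)$ are variable Calder\'on--Zygmund kernels in the sense of Definition~\ref{dVCZ}. Their constants depend only on $n,m,b,\delta,\|\bA\|_{\infty,\Omega}$; this uses the standard expansion in spherical harmonics, as in \cite{ChFrL1}. Writing $\mathcal L(x,D)\mathbf v(y)-\mathcal L(y,D)\mathbf v(y)$ as a sum of terms $[a^{(\alpha)}_{jk}(x)-a^{(\alpha)}_{jk}(y)]D^\alpha v_k(y)$, we obtain
$$
D^\gamma\mathbf v=\sum \K(\mathcal L\mathbf v)+\sum \C[a^{(\alpha)}_{jk},D^\alpha v_k]+B(x)\mathcal L\mathbf v.
$$

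\textbf{Step 2 (small-ball estimate).} We need a localized $L^\Phi$ version of \eqref{local}. To get it, extend each $a=a^{(\alpha)}_{jk}$ from $\B_r$ to a function $\tilde a$ on $\R^n$ with $\|\tilde a\|_\ast\le C\eta_{\bA}(2r)$; this is the usual even-reflection/extension lemma of \cite{ChFrL1}. Since $\C[a,g]=\C[\tilde a,g]$ on $\B_r$ for $g$ supported in $\B_r$, Theorem~\ref{thm-4} gives
$$
\|\C[a,g]\|_{L^\Phi(\B_r)}\le C\eta_{\bA}(2r)\|g\|_{L^\Phi(\B_r)}.
$$
Choose $\rho_0$ so small that $C\eta_{\bA}(2\rho_0)\le 1/2$ (summed over all indices). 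The commutator term is then absorbed, and we get
$$
\|D^{2b}\mathbf v\|_{L^\Phi(\B_r)}\le C\|\mathcal L\mathbf v\|_{L^\Phi(\B_r)},\qquad r<\rho_0.
$$
The same bound extends to $\mathbf v\in W^{2b}_\Phi$ with compact support in $\B_r$. Indeed, $C_0^\infty$ is dense because $\Phi\in\Delta_2$, and the relevant operators are continuous on $L^\Phi$.

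\textbf{Step 3 (cutoff, interpolation and iteration).} Take a cutoff $\theta\in C_0^\infty(\B_{r})$ with $\theta=1$ on $\B_{\sigma r}$ and $|D^k\theta|\le C((1-\sigma)r)^{-k}$. Apply Step 2 to $\mathbf v=\theta\mathbf u$. Since $\mathcal L(\theta\mathbf u)=\theta\mathbf f+\sum_{1\le|\beta|\le 2b}(\text{bounded})\,D^\beta\theta\, D^{2b-|\beta|}\mathbf u$, this yields
$$
\|D^{2b}\mathbf u\|_{L^\Phi(\B_{\sigma r})}\le C\Big(\|\mathbf f\|_{L^\Phi(\B_r)}+\sum_{k<2b}((1-\sigma)r)^{k-2b}\|D^k\mathbf u\|_{L^\Phi(\B_r)}\Big).
$$
The lower-order terms must be controlled by an Orlicz interpolation inequality on balls:
$$
\|D^k\mathbf u\|_{L^\Phi(\B_\rho)}\le \varepsilon\rho^{2b-k}\|D^{2b}\mathbf u\|_{L^\Phi(\B_\rho)}+C\varepsilon^{-k/(2b-k)}\rho^{-k}\|\mathbf u\|_{L^\Phi(\B_\rho)}.
$$
I would prove this by the usual one-dimensional argument via difference quotients and the mean value theorem, which gives pointwise bounds by averages. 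These averages are then passed to $L^\Phi$ using Jensen's inequality (Proposition~\ref{JensInOrl}) and Theorem~\ref{key1.2}, together with dilation invariance of the argument. Next we use the weighted seminorms
$$
\Theta_k=\sup_{0<\sigma<1}((1-\sigma)r)^k\|D^k\mathbf u\|_{L^\Phi(\B_{\sigma r})}.
$$
The standard iteration lemma (Gilbarg--Trudinger type) absorbs the top-order term and yields
$$
\Theta_{2b}\le C\big(r^{2b}\|\mathbf f\|_{L^\Phi(\B_r)}+\|\mathbf u\|_{L^\Phi(\B_r)}\big),
$$
and then the same bound for all $\Theta_k$. Finally, cover $\overline{\Omega'}$ by finitely many balls of radius $\min\{\rho_0,\dist(\Omega',\partial\Omega'')\}/2$, apply the estimate on each ball with $\sigma=1/2$, and sum; this gives \eqref{estim1}.

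The \textbf{main obstacle} is Step 3, namely proving the interpolation inequality with the correct scaling in a Luxemburg norm, which is not homogeneous under dilation. I would handle it at the level of modulars. One proves $\int\Phi(\rho^k|D^k u|)\le C\int\Phi(\varepsilon\rho^{2b}|D^{2b}u|)+\Phi(C_\varepsilon|u|)$ on the unit ball and rescales. The inequality is then converted to norms by homogeneity in $u$, which is legitimate thanks to $\Delta_2$, via \eqref{lambda}. A secondary point is the $BMO$ extension in Step 2; this is purely a statement about coefficients and can be taken from the $L^p$ literature.
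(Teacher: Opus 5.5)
Your proposal is correct and follows essentially the same route as the paper: freezing the coefficients and using the fundamental-matrix representation, then the Orlicz bounds of Theorem~\ref{thm-4} for $\K$ and $\C[a,\cdot]$ together with VMO smallness on small balls to absorb the commutators, then a cutoff with Leibniz's rule, an Orlicz interpolation inequality on balls fed into weighted seminorms $\Theta_k$, and a final covering. You are more explicit than the paper on the two points it only asserts or cites (the BMO extension of the coefficients behind the localized commutator bound, and a modular-level proof of the interpolation inequality), but your Step 3 cutoff should run from $\B_{\sigma r}$ to an intermediate ball $\B_{\sigma' r}$ rather than to $\B_r$, as the paper does with $\theta'=\theta(3-\theta)/2$ and weights $\theta(1-\theta)r$, because with the lower-order norms taken over all of $\B_r$ the $\Theta_k$ iteration does not close.
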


\begin{proof}
To prove the theorem, we freeze the coefficients of \eqref{syst} at a point
$x_0\in\Omega$ and consider the constant-coefficients elliptic differential
operator of order $2bm$ defined by
\begin{equation}
L(x_0,D) :=\det\mathcal{L}(x_0,D) =
\det\left\{\sum_{|\alpha|=2b}
a_{jk}^{(\alpha)}(x_0)\,D^\alpha\right\}.
\end{equation}

Since $L(x_0,D)$ is a constant-coefficients elliptic operator, it admits a fundamental solution by the Ehren\-preis--Malgrange theorem and we denote it by $\widetilde{\Gamma}(x_0;\cdot)$. Its explicit form depends on the parity of the dimension $n$ (see, for instance, \cite{DgN,J,PSf2}). More precisely, if $n$ is odd, then
$$
\widetilde{\Gamma}(x_0;x-y)
=
|x-y|^{2bm-n}
P\!\left(
x_0;
\frac{x-y}{|x-y|}
\right),
$$
where $P(x_0;\cdot)$ is a real-analytic function on the unit sphere
$\SF^{n-1}$; if $n$ is even, then one introduces an auxiliary variable
$x_{n+1}$ and regards functions
$f(x,x_{n+1})$ as being independent of $x_{n+1}$ for each fixed
$x\in\R^n$. 
This explains the assumption $n\geq 3,$ which avoids the logarithmic term appearing in even dimensions.

Let $\{L_{jk}(x_0,\zeta)\}_{j,k=1}^m$ denote the cofactor matrix of
$\{l_{jk}(x_0,\zeta)\}_{j,k=1}^m$. Observe that, for each fixed
$j,k\in\{1,\ldots,m\}$, the operator $L_{jk}(x_0,D)$ is either a
differential operator of order $2b(m-1)$ or the zero operator.
Since 
$$
\sum_{k=1}^m
l_{ik}(x_0,\zeta)\,
L_{jk}(x_0,\zeta) =
\delta_{ij}\,L(x_0,\zeta),
$$
it follows  that the fundamental matrix $\Gamma(x_0;x)$ associated with the
constant-coefficients operator $\mathcal{L}(x_0,D)$
%of the system \eqref{syst} 
has entries
$$
\Gamma_{jk}(x_0;x) =
L_{jk}(x_0,D)\,\widetilde{\Gamma}(x_0;x).
$$
%Here, for each fixed $j,k\in\{1,\ldots,m\}$, the operator $L_{jk}(x_0,D)$ is either a differential operator of order $2b(m-1)$ or the zero operator. 
We note that some cofactors
$L_{jk}(x_0,\zeta)$ may vanish identically as polynomials in $\zeta$.
Since $\widetilde\Gamma(x_0;\cdot)$ is homogeneous of degree
$2bm-n$, it follows that $ 
D^\beta\Gamma_{jk}(x_0;\cdot)$ 
is homogeneous of degree
$$
2bm-n-2b(m-1)-|\beta| =2b-n-|\beta|.
$$

\begin{rem}\label{kern}
For $|\beta|=2b$ the kernels
$D^\beta\Gamma_{jk}(x_0;\cdot)$ are homogeneous of degree $-n$ and satisfy the assumptions of Definition~\ref{dVCZ}.
\end{rem}

Let us fix a ball $\B_r\Subset\Omega$. It is sufficient to prove the estimate
for $\bm{v}\in C_0^\infty(\B_r;\R^m)$ and then extend it to arbitrary
strong solutions by a standard density argument.
Indeed, since $\Phi$ satisfies the $\Delta_2$-condition, the
Orlicz--Sobolev space ${W}^{2b}_\Phi(\B_r;\R^m)$ is separable, and
$C_0^\infty(\B_r;\R^m)$ is dense in it. Therefore, for every
$\bm{v}\in W^{2b}_\Phi(\B_r;\R^m),$  there exists a sequence
$\{\bm{v}_k\}\subset C_0^\infty(\B_r;\R^m)$ such that  
$ \bm{v}_k\to\bm{v}$ in $W^{2b}_\Phi(\B_r;\R^m)$ 
(see for instance \cite[Theorem 8.21]{AF}). 
Applying the estimate proved below to $\bm{v}_k$ and passing to the limit by the continuity of the differential operators and of the Luxemburg norm, we obtain the desired estimate for $\bm{v}$ (see also \cite{CGMP,KZ1,KZ2,VK,RR2}).

Applying the constant-coefficients  operator $\mathcal{L}(x_0,D)$ to $\bm{v}$, we obtain
\begin{align*}
\mathcal{L}(x_0,D)\bm{v}(x)
&=\bigl(\mathcal{L}(x_0,D)-\mathcal{L}(x,D)\bigr)\bm{v}(x)
+\mathcal{L}(x,D)\bm{v}(x)\\
&=\sum_{|\alpha|=2b}
\bigl(\bA_\alpha(x_0)-\bA_\alpha(x)\bigr)
D^\alpha\bm{v}(x)
+\mathcal{L}(x,D)\bm{v}(x).
\end{align*}

By the \textit{interior representation formula} for constant-coefficients elliptic systems associated with the fundamental matrix $\Gamma(x_0;\cdot)$,
(see  \cite{LU,PSf2}), we have
\begin{equation}\label{eq-v}
\bm{v}(x)
= \int_{\B_r}
\Gamma(x_0;x-y)\,
\mathcal{L}(x_0,D)\bm{v}(y)\,dy. 
\end{equation}
%where $\Gamma(x_0;\cdot)$ denotes the fundamental matrix associated with the constant-coefficient operator obtained by freezing $\mathcal{L}(x,D)$ at $x_0\in\Omega$.

Define
$$
g(y)=
\begin{cases}
\displaystyle
\sum_{|\alpha'|=2b}
\bigl(\bA_{\alpha'}(x_0)-\bA_{\alpha'}(y)\bigr)
D^{\alpha'}\bm{v}(y)
+\mathcal{L}(y,D)\bm{v}(y),
& y\in\B_r,\\[6pt] 
0, & y\notin\B_r.
\end{cases}
$$

Hence, \eqref{eq-v} can be rewritten as
$$
\bm{v}(x)
= \bigl(\Gamma(x_0;\cdot)*g\bigr)(x).
$$

Differentiating the representation formula and using standard Fourier transform arguments, together with integration by parts (see, for instance, \cite{ChFrL1}), we obtain
$$
D^\alpha\bm{v}(x)=
\operatorname{p.v.} \,  \bigl(D^\alpha\Gamma(x_0;\cdot)*g\bigr)(x)+
g(x)\sum_{s=1}^n
\int_{\SF^{n-1}}
D^{\gamma_s}\Gamma(x_0;y)\,\nu_s\,d\sigma_y,
$$
where $ 
\gamma_s=
(\alpha_1,\ldots,\alpha_{s-1},
\alpha_s-1,\alpha_{s+1},\ldots,\alpha_n)$ 
is a multi-index of order $|\gamma_s|=2b-1$, and $\nu_s$ denotes the
$s$-th component of the outward unit normal to $\SF^{n-1}$.

Unfreezing the coefficients by setting $x_0=x$, we obtain
\begin{align*}
D^\alpha\bm{v}(x)
&=
\operatorname{p.v.}
\int_{\B_r}
D^\alpha\Gamma(x;x-y)
\Bigg[
\sum_{|\alpha'|=2b}
\bigl(\bA_{\alpha'}(x)-\bA_{\alpha'}(y)\bigr)
D^{\alpha'}\bm{v}(y)
\\
&\qquad\qquad\qquad\qquad
+\mathcal{L}(y,D)\bm{v}(y)
\Bigg]\,dy
\\
&\qquad
+\mathcal{L}(x,D)\bm{v}(x)
\sum_{s=1}^n
\int_{\SF^{n-1}}
D^{\gamma_s}\Gamma(x;y)\,\nu_s\,d\sigma_y.
\end{align*}

The entries of the matrix-valued kernel
$D^\alpha\Gamma(x;\cdot)$, with $|\alpha|=2b$, are variable
Calder\'on--Zygmund kernels, as noted  in Remark\stilde\ref{kern}
%in the sense of Definition~\ref{dVCZ}
(cf.~\cite{PSf2}). Consequently, they define the singular integral operators
\begin{align*}
\K_\alpha(\mathcal{L}(x,D)\bm{v})(x)
&=
\operatorname{p.v.}
\int_{\B_r}
D^\alpha\Gamma(x;x-y)\,
\mathcal{L}(y,D)\bm{v}(y)\,dy,\\
\C_\alpha[\bA_{\alpha'},D^{\alpha'}\bm{v}](x)
&=
\operatorname{p.v.}
\int_{\B_r}
D^\alpha\Gamma(x;x-y)\,
\bigl(\bA_{\alpha'}(x)-\bA_{\alpha'}(y)\bigr)
D^{\alpha'}\bm{v}(y)\,dy.
\end{align*}
Therefore,
\begin{equation*}
\begin{split}
D^\alpha\bm{v}(x)
&=
\sum_{|\alpha'|=2b}
\C_\alpha[\bA_{\alpha'},D^{\alpha'}\bm{v}](x)
+\K_\alpha(\mathcal{L}(x,D)\bm{v})(x)
%\\ &\qquad
+\mathcal{L}(x,D)\bm{v}(x)
\sum_{s=1}^n
\int_{\SF^{n-1}}
D^{\gamma_s}\Gamma(x;y)\,\nu_s\,d\sigma_y.
\end{split}
\end{equation*}

Applying the boundedness results \eqref{eq-Kfphi2} and
\eqref{eq-CAFphi2} to the above representation %\textcolor{red}{write something about why there is not the estimate of last integral?} 
we obtain
$$
\|D^{2b}\bm{u}\|_{L^\Phi(\B_r)}
\leq 
C\left(
\sum_{|\alpha'|=2b}
\|\bA_{\alpha'}\|_{*,\B_r}\,
\|D^{\alpha'}\bm{u}\|_{L^\Phi(\B_r)} +
\|\mathcal{L}(\cdot,D)\bm{u}\|_{L^\Phi(\B_r)}
\right).
$$

The $VMO$ assumption on the higher-order coefficients
(cf.~\cite{PSf1}) implies that, 
%\textcolor{red}{write something about the equivalence between local estimate \eqref{local} and the corresponding Orlicz one} 
for every $\varepsilon>0$, there exists
$r_0=r_0(\varepsilon,\eta_{\bA})>0$ such that, whenever $r<r_0$,
$$
\|D^{2b}\bm{u}\|_{L^\Phi(\B_r)}
\leq
C\left( \varepsilon
\sum_{|\alpha'|=2b}
\|D^{\alpha'}\bm{u}\|_{L^\Phi(\B_r)} +
\|\mathcal{L}(\cdot,D)\bm{u}\|_{L^\Phi(\B_r)}\right).
$$

Since all the norms on the finite-dimensional space of derivatives of order $2b$ are equivalent, and 
$$
\sum_{|\alpha'|=2b}
\|D^{\alpha'}\bm{u}\|_{L^\Phi(\B_r)}
\leq C\,\|D^{2b}\bm{u}\|_{L^\Phi(\B_r)},
$$
it follows that
$$
\|D^{2b}\bm{u}\|_{L^\Phi(\B_r)}
\leq 
C\left( \varepsilon\,
\|D^{2b}\bm{u}\|_{L^\Phi(\B_r)} +
\|\mathcal{L}(\cdot,D)\bm{u}\|_{L^\Phi(\B_r)}
\right).
$$

Choosing $\varepsilon>0$ sufficiently small, the first term on the right-hand
side can be absorbed into the left-hand side, yielding
\begin{equation}\label{final}
\|D^{2b}\bm{u}\|_{L^\Phi(\B_r)}
\leq 
C\,\|\mathcal{L}(\cdot,D)\bm{u}\|_{L^\Phi(\B_r)}.
\end{equation}

Let $\theta\in(0,1)$ and define
$ \theta':=\theta(3-\theta)/2>\theta, $
%\textcolor{blue}{Fix a ball $\B_r\Subset\Omega$ is it a repetition?}, 
and let
$\varphi\in C_0^\infty(\B_r)$ be a cut-off function defined by
$$
\varphi(x)=
\begin{cases}
1, & x\in\B_{\theta r},\\
0, & x\notin\B_{\theta' r}.
\end{cases}
$$
Moreover,
$$
|D^s\varphi(x)| \leq
C(s)\,[\theta(1-\theta)r]^{-s},
\qquad
1\leq s\leq 2b,
$$
since $ 
\theta'-\theta=\frac{\theta(1-\theta)}{2}.$ 

For $\bm{u}\in W^{2b}_{\Phi, \loc}(\Omega;\R^m)$, it follows that $\bm{v}:=\varphi\bm{u}\in W^{2b}_\Phi(\B_r;\R^m)$ with compact support in $\B_r$. 

Therefore, we may apply
\eqref{final} to $\bm{v}$ obtaining 
\begin{equation}\label{phiu}
\|D^{2b}\bm{u}\|_{L^\Phi(\B_{\theta r})}
\leq 
\|D^{2b}\bm{v}\|_{L^\Phi(\B_{\theta' r})}
\leq 
C\,\|\mathcal{L}(\cdot,D)\bm{v}\|_{L^\Phi(\B_{\theta' r})}
\leq 
C\,\|\mathcal{L}(\cdot,D)(\varphi\bm{u})\|_{L^\Phi(\B_{\theta' r})}.
\end{equation}

Applying Leibniz's rule  (cf. \cite{PSf2}), we obtain
\allowdisplaybreaks
\begin{align*}
\mathcal{L}(x,D)(\varphi\bm{u})(x)
&=\varphi(x)
\sum_{|\alpha|=2b}
\bA_\alpha(x)\,D^\alpha\bm{u}(x)+
\bm{u}(x)\sum_{|\alpha|=2b}
\bA_\alpha(x)\,D^\alpha\varphi(x) \\
&+\sum_{|\alpha|=2b}
\sum_{0<|\beta|<2b}
c_{\alpha,\beta}\,
\bA_\alpha(x)\, D^\beta\bm{u}(x)\,
D^{\alpha-\beta}\varphi(x),
\end{align*}
where $c_{\alpha,\beta}$ are the multinomial coefficients arising from Leibniz's formula.

Therefore, by \eqref{phiu} we obtain
\begin{align*}
\|D^{2b}\bm{u}\|_{L^\Phi(\B_{\theta r})}
\leq C\,\Bigg\{
\|\bm{f}\|_{L^\Phi(\B_{\theta'r})}+
\sum_{s=1}^{2b-1}
\frac{\|D^s\bm{u}\|_{L^\Phi(\B_{\theta'r})}}
{[\theta(1-\theta)r]^{\,2b-s}} +
\frac{\|\bm{u}\|_{L^\Phi(\B_{\theta'r})}}
{[\theta(1-\theta)r]^{\,2b}}\Bigg\}.
\end{align*}

%%%%%%%%%%%%%%%%%%%

Multiplying both sides by $[\theta(1-\theta)r]^{2b}$, using the inequality
$
\theta(1-\theta)\leq 2\theta'(1-\theta'),
$
and absorbing the resulting powers of $2$ into the constant $C$, we obtain
\begin{align*}
[\theta(1-\theta)r]^{2b}
\|D^{2b}\bm{u}\|_{L^\Phi(\B_{\theta r})}
&\leq C\,\Bigg\{
[\theta'(1-\theta')r]^{2b}
\|\bm{f}\|_{L^\Phi(\B_{\theta'r})}\\
&\qquad
+\sum_{s=1}^{2b-1}
[\theta'(1-\theta')r]^s
\|D^s\bm{u}\|_{L^\Phi(\B_{\theta'r})}
+\|\bm{u}\|_{L^\Phi(\B_{\theta'r})}
\Bigg\}.
\end{align*}

Define the seminorms
$$
\Theta_s :=
\sup_{\theta\in(0,1)}
[\theta(1-\theta)r]^s
\|D^s\bm{u}\|_{L^\Phi(\B_{\theta r})},
\qquad
0\leq s\leq 2b.
$$
Then, the previous inequality can be rewritten as
\begin{equation}\label{02beta}
\Theta_{2b}
\leq 
C\,\left\{
r^{2b}\|\bm{f}\|_{L^\Phi(\B_r)}
+\Theta_0 +\sum_{s=1}^{2b-1}\Theta_s
\right\}.
\end{equation}

Applying  the interpolation inequality in Orlicz spaces
(cf.~\cite{AK1}), valid on balls, we obtain that, for every
$1\leq  s\le2b-1$ and every $\mu>0$, there exists a constant
$C(s)>0$, independent of $\mu$, $r$, and $\theta$, such that
\begin{equation}\label{interp}
\|D^s\bm{u}\|_{L^\Phi(\B_{\theta r})}
\leq 
\mu\,\|D^{2b}\bm{u}\|_{L^\Phi(\B_{\theta r})} +
\frac{C(s)}{\mu^{\frac{s}{2b-s}}}
\|\bm{u}\|_{L^\Phi(\B_{\theta r})}.
\end{equation}
Multiplying both sides of \eqref{interp} by
$[\theta(1-\theta)r]^s$ yields
\begin{align*}
[\theta(1-\theta)r]^s
\|D^s\bm{u}\|_{L^\Phi(\B_{\theta r})}
&\leq \mu\,
[\theta(1-\theta)r]^{-(2b-s)}
[\theta(1-\theta)r]^{2b}
\|D^{2b}\bm{u}\|_{L^\Phi(\B_{\theta r})} \\
&+\frac{C(s)}{\mu^{\frac{s}{2b-s}}}
[\theta(1-\theta)r]^s
\|\bm{u}\|_{L^\Phi(\B_{\theta r})}.
\end{align*}

Choosing 
$$
\mu:=\varepsilon[\theta(1-\theta)r]^{2b-s}, 
$$
where $\varepsilon>0$ is arbitrary, we obtain
\begin{align*}
[\theta(1-\theta)r]^s
\|D^s\bm{u}\|_{L^\Phi(\B_{\theta r})}
&\leq\varepsilon
[\theta(1-\theta)r]^{2b}
\|D^{2b}\bm{u}\|_{L^\Phi(\B_{\theta r})}
+\frac{C(s)}{\varepsilon^{\frac{s}{2b-s}}}
\|\bm{u}\|_{L^\Phi(\B_{\theta r})}.
\end{align*}
Taking the supremum over $\theta\in(0,1)$, we conclude that
$$
\Theta_s\leq
\varepsilon\,\Theta_{2b}+
\frac{C(s)}{\varepsilon^{\frac{s}{2b-s}}}\Theta_0,
\qquad
1\leq  s\le2b-1.
$$

Recalling the definitions of $\Theta_{2b}$ and $\Theta_0$ and choosing
$\theta=\frac12$, we obtain the following Caccioppoli-type estimate:
\begin{equation}\label{estim2}
\|D^{2b}\bm{u}\|_{L^\Phi(\B_{r/2})}
\leq C\,\left(
\|\bm{f}\|_{L^\Phi(\B_r)}
+ r^{-2b}\|\bm{u}\|_{L^\Phi(\B_r)}
\right).
\end{equation}

Finally, estimate \eqref{estim1} follows from \eqref{estim2} by covering
$\Omega'$ with finitely many balls of radius $\B_{r/2}\Subset \Omega''$  where $
r<\dist (\Omega',\partial\Omega'').$
This completes the proof.
\end{proof}

\paragraph{\textbf{\small Acknowledgements.}} The research of  A.~Gogatishvili was partially supported by the grant project 23-04720S of the Czech Science Foundation (GA\v{C}R); the Institute of Mathematics, CAS, is supported by RVO:67985840; and by the Shota Rustaveli National Science Foundation (SRNSF), grant no: FR22-17770.

P.~Salerno and L.~Softova are members of INDAM--GNAMPA.
Part of the research presented in this paper was carried out during the authors'
visit to the \emph{Institute of Mathematics of the Czech Academy of Sciences}.
The authors gratefully acknowledge the Institute and its staff for their
hospitality and support.

\paragraph{\textbf{Compliance with Ethical Standards}.}
The authors declare that they have no conflict of interest.

\paragraph{\textbf{Data Availability Statement}.}
Data sharing is not applicable to this article as no datasets were generated or analysed during the current study.

\paragraph{\textbf{Author Contributions.}} All authors contributed equally to this work. All authors participated in the conception of the study, the development of the mathematical results, the writing and revision of the manuscript, and approved the final version for publication.

\bibliographystyle{amsplain}

\end{document}